\title[Some notes on ergodic theorems  for
$U$-statistics of order $m$]{Some notes on ergodic theorem  for
$U$-statistics of order $m$ for stationary and not necessarily ergodic sequences}
\keywords{$U$-statistics, ergodic theorem, stationary sequences}
\date{\today}
\address[$\dagger$]{Institut de Recherche Mathématique Avancée
UMR 7501, Université de Strasbourg and CNRS
7 rue René Descartes
67000 Strasbourg, France}
\author{Davide Giraudo}
\numberwithin{equation}{section}
\renewcommand{\leq}{\leqslant}
\renewcommand{\geq}{\geqslant}
\newtheorem{Theorem}{Theorem}[section]
\newtheorem{Lemma}[Theorem]{Lemma}
\newtheorem{Corollary}[Theorem]{Corollary}
\theoremstyle{remark}
\newcommand{\intent}[1]{\llbracket #1\rrbracket}
\newcommand{\Bca}{\mathcal{B}}
\newcommand{\Fca}{\mathcal{F}}
\newcommand{\Ica}{\mathcal{I}}
\newcommand{\Sca}{\mathcal{S}}
\newcommand \ens[1]{\left\{ #1\right\}}
\newcommand \R{\mathbb R}
\newcommand \N{\mathbb N}
\newcommand \PP{\mathbb P}
\newcommand{\el}{\mathbb L}
\newcommand{\E}[1]{\mathbb E\left[#1\right]}
\newcommand{\pr}[1]{\left(#1\right)}
\newcommand \Z{\mathbb Z}
\newcommand \abs[1]{\left|#1\right|}
\newcommand \eps{\varepsilon}
\newcommand{\norm}[1]{\left\lVert #1 \right\rVert}
\newcommand{\inc}{\operatorname{Inc}}
\newcommand{\ind}[1]{\mathbf{1}_{#1}}
\newcommand{\sgn}[1]{\operatorname{sgn}\pr{#1}}
\begin{document}

\begin{abstract}
 In this note, we give sufficient conditions for the almost sure 
and the convergence in $\el^p$ of a $U$-statistic of order $m$ built on a 
strictly stationary but not necessarily ergodic sequence.
\end{abstract}
\maketitle
\section{Introduction and main results}

\cite{MR26294} introduced the concept of $U$-statistics of order $m\in\N^*$, defined as follows:
if $\pr{X_i}_{i\geq 1}$ is strictly stationary sequence taking values in a measurable space $\pr{S,\Sca}$ and $h\colon S^m\to\R$, the $U$-statistic of kernel $h$ is given by 
\begin{equation}
U_{m,n,h}:=\frac 1{\binom mn}\sum_{\pr{i_\ell}_{\ell\in\intent{1,m}}\in\inc^m_n }
h\pr{X_{i_1},\dots,X_{i_m}},
\end{equation}
where $\intent{1,m}=\ens{k\in\N,1\leq k\leq m}$ and $\inc^m_n 
=\ens{\pr{i_\ell}_{\ell\in\intent{1,m}}, 1\leq i_1<i_2<\dots<i_m\leq n}$. 
If $\pr{X_i}_{i\geq 1}$ is i.i.d.\ and $\E{\abs{h\pr{X_1,\dots,X_m}}}$ is finite, 
then $U_{m,n,h}\to \E{h\pr{X_1,\dots,X_m}}$ a.s. and in $\el^1$. A natural question 
is whether for a strictly stationary sequence $\pr{X_i}_{i\geq 1}$, 
the sequence $\pr{U_{m,n,h}}_{n\geq m}$ converges almost surely or in $\el^1$ 
to some random variable. Assume first that $\pr{X_i}_{i\geq 1}$ is ergodic. 
It is shown in \cite{MR1363941} that if  $S=\R$, $\pr{X_i}_{i\geq 1}$ has common distribution $\PP_{X_0}$, $h$ is bounded and $\PP_{X_0}\times\dots\times \PP_{X_0}$ almost everywhere continuous, then 
\begin{equation}\label{eq:conv_ps_generale}
U_{m,n,h}\to \int h\pr{x_1,\dots,x_m}d\PP_{X_0}\pr{x_1}\dots 
d\PP_{X_0}\pr{x_m}\mbox{ a.s.}.
\end{equation}
Convergence in probability was also investigated in \cite{MR1979966}. A proof of 
\eqref{eq:conv_ps_generale} in the context of absolutely regular sequences has been given in \cite{MR1624866}. Moreover, Marcinkievicz law of large numbers for $U$-statistics of order two has been established in \cite{MR2571765} for absolutely regular sequences and \cite{MR4243516} for sequences expressable as functions of an independent sequence.

It is worth pointing out that in general, the sequence $\pr{U_{2,n,h}}_{n\geq 2}$ may fail to converge.  For instance, \cite{MR1363941}, Example 4.5, found a 
non-bounded 
kernel $h$ and a strictly stationary sequence such that $\limsup_{n\to\infty}U_{2,n,h}=\infty$.
 Moreover, the example given in Proposition~3 of 
\cite{dehling2023remarks} shows the existence of a bounded kernel $h$ 
and a stationary ergodic sequence $\pr{X_i}_{i\geq 1}$
such that a subsequence of $\pr{U_{2,n,h}}_{n\geq 2}$ converges to $0$ almost 
surely and an other subsequence of $\pr{U_{2,n,h}}_{n\geq 2}$ converges to $1$ 
almost surely. Also, as Proposition~4 shows, boundedness in $\el^1$ of 
$\pr{h\pr{X_1,X_j}}_{j\geq 2}$ plays a key role, otherwise, we can find a kernel $h$ and a strictly stationary sequence $\pr{X_i}_{i\geq 1}$ for which the sequence
 $\pr{U_{2,n,h}-\E{U_{2,n,h}}}_{n\geq 2}$ converges to a non-degenerate 
normal distribution.

Some results have been established in \cite{dehling2023remarks}, assuming that 
the strictly stationary sequence $\pr{X_i}_{i\geq 1}$ is ergodic.
\begin{enumerate}
\item If $S$ is a separable metric space, $h\colon S\times S\rightarrow \R$ is a symmetric kernel that is bounded and $\PP_{X_0}\times \PP_{X_0}$-almost everywhere continuous, then, as $n\rightarrow \infty$, $U_{2,n,h}\to \int 
h\pr{x,y}d\PP_{X_0}\pr{x}d\PP_{X_0}\pr{y}$ almost surely.
\item If $S=\R^d$, the family $\ens{h\pr{X_1,X_j},j\geq 1}$ is uniformly integrable, 
 $h$ is $\PP_{X_0}\times\dots\times \PP_{X_0}$ almost everywhere continuous and symmetric, then 
 \begin{equation}\label{eq:conv_thm_ergo_densite_bornee}
 \lim_{n\to\infty}\E{\abs{U_{2,n,h}-\int_{\R^d}\int_{\R^d}h\pr{x,y}d\PP_{X_0}\pr{x}d\PP_{X_0}\pr{y} 
   }}= 0.
\end{equation}
\item If $S=\R^d$, the family $\ens{h\pr{X_1,X_j},j\geq 1}$ is uniformly integrable, 
$\int_{\R^d}\int_{\R^d}\abs{h\pr{x,y}}d\PP_{X_0}\pr{x}d\PP_{X_0}\pr{y} $ is finite,
the random variable  $X_0$ has a bounded density 
with respect to the Lebesgue measure on $\R^d$ and  
 for each $k\geq 1$,  the vector $\pr{X_0,X_k}$ has a density $f_k$ with respect to the Lebesgue measure of $\R^d\times\R^d$ and $\sup_{k\geq 1}\sup_{s,t\in\R^d}f_k\pr{s,t}$ is finite, then \eqref{eq:conv_thm_ergo_densite_bornee} holds.
\end{enumerate}

Such results lead us to consider the following extensions. The case of $U$-statistics of order two has been adressed and we may want to extend these results to $U$-statistics of arbitrary order, especially because such mathematical object is widely used in statistics, for instance in \cite{MR3127883} for the distance covariance and
 stochastic geometry (see \cite{MR3585402}).  Moreover, it is a natural question to 
see what happens in the non-ergodic case. It is natural to consider a 
decomposition of $\Omega$ into ergodic components and use the results of 
the ergodic case to each of them. However, the 
multiple integral expression of the limit does not give a simple expression. 
Moreover, the assumptions of the ergodic case 
for each ergodic component, namely, almost everywhere continuity (for the product law of the marginal distribution) of the kernel and and assumption on density of the vector $\pr{X_{i_1},\dots,X_{i_m}}$
does not seem to give a tractable condition. Instead,  we will use 
the following approach: when $h$ is symmetric and bounded, the convergence of 
the considered $U$-statistic is viewed as the convergence 
of random product measures toward a product of random measures (deterministic 
measures in the ergodic case). When we make an assumption on the density of  
$\pr{X_{i_1},\dots,X_{i_m}}$, we approximate 
$h$ by linear combinations of products of indicator functions. This approach has similarities with the one used in \cite{MR3256808}. The case 
of products of indicators follows then from an application of the 
usual ergodic theorem.

We will assume that the strictly stationary sequence is such that  $X_i=X_0\circ 
T^i$, where $T\colon\Omega\to\Omega$ is a measure preserving map. 
By \cite{MR0229267}, page 107, we know that we can find a random 
variable $X$ and a measure preserving map $T$ on $\R^{\Z}$ such that the 
sequences $\pr{X_i}_{i\in\Z}$ and $\pr{X\circ T^i}_{i\in\Z}$ have the same 
distribution. We will study almost sure convergence and in $\el^p$ 
sequences of the form $\pr{g_n\pr{X_1,\dots,X_n}}_{n\geq 1}$, where $g_n\colon 
S^n\to\R$, which has the same distribution as 
$\pr{g_n\pr{X\circ T^1,\dots,X\circ T^n}}_{n\geq 1}$. Therefore, their almost 
sure convergence  are equivalent, as well as their convergence in $\el^p$. To 
ease the notations, we will write $X=X_0$.

We will study the almost sure convergence of $\pr{U_{m,n,h}}_{n\geq m}$ 
and the convergence in $\el^p$. We will denote by $\norm{Z}_p:=
\pr{\E{\abs{Z^p}}}^{1/p}$ the norm of a real-valued random variable $Z$. 

It turns out that the limit will be expressed as an integral with respect to products of
a random measure defined as follows:
\begin{equation}\label{eq:def_mu_omega}
\mu_\omega\pr{A}=\E{\ind{X_0\in A}\mid\Ica}\pr{\omega}, A\in\Bca\pr{S},
\end{equation}
where $\Ica$ denotes the $\sigma$-algebra of invariant sets, that is, the sets $E$ such that $T^{-1}E=E$.
The limit of $U$-statistics will be expressed as integral with respect to 
product measure of $\mu_\omega$, which lead us to define 
\begin{equation}\label{eq:definition_of_I_h_omega}
I_m\pr{S,h,\omega}:=\int_{S^m}h\pr{x_1,\dots,x_m}d\mu_\omega\pr{x_1}
\dots d\mu_\omega\pr{x_m}.
\end{equation}
We will also define as $I_m\pr{S,h,\cdot}$ the random variable 
given by
\begin{equation}\label{eq:definition_of_I_h_omega_dot}
I_m\pr{S,h,\cdot} \colon \omega\mapsto   I_m\pr{S,h,\omega}.
\end{equation}  

Some assumption will be made on the set of discontinuity points of $h$, which will be denoted by $D\pr{h}$. 

\subsection{Almost sure convergence}

Our first result deals with the almost sure convergence of a $U$-statistic 
under the assumption of boundedness of the kernel and negligibility of the set of discontinuity with respect to the product of the marginal law.

\begin{Theorem}\label{thm:ergodic_theorem_sym_ps}
Let $\pr{S,d}$ be a separable metric space, let 
$\pr{X_i}_{i\in\Z}=\pr{X_0\circ T^i}_{i\in\Z}$ be a strictly stationary 
sequence. Suppose that $h\colon S^m\to\R$ satisfies the following assumptions:
\begin{enumerate}[label=(A.1.\arabic*)]
\item\label{ass:h_sym} $h$ is symmetric, that is, $h\pr{x_{\sigma\pr{1}},\dots,x_{\sigma\pr{m}}}=h\pr{x_1,\dots,x_m}$ for each $x_1,\dots,x_m\in S$ and each bijective 
$\sigma\colon\intent{1,m}\to\intent{1,m}$,
\item\label{ass:h_borne} $h$ is  bounded and
\item\label{ass:h_cont} for almost every $\omega\in\Omega$,
$I_m\pr{S,\ind{D\pr{h}},\omega}=0$, where $D\pr{h}$ denotes
the set of discontinuity points of $h$.
\end{enumerate}
Then for almost every $\omega\in\Omega$, 
the following convergence holds:
\begin{equation}\label{eq:conv_ps}
\lim_{n\to\infty} U_{m,n,h}\pr{\omega}
=I_m\pr{S,h,\omega} ,
\end{equation}
where $I_m\pr{S,h,\omega} $ is defined as in \eqref{eq:definition_of_I_h_omega}.
\end{Theorem}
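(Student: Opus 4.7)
The plan is to realise the $U$-statistic, up to a vanishing remainder, as the integral of $h$ against the $m$-fold product of the empirical measure
$\mu_{n,\omega}:=\frac1n\sum_{i=1}^n\delta_{X_i(\omega)}$, and then to reduce the statement to the weak convergence $\mu_{n,\omega}\Rightarrow\mu_\omega$ combined with a Portmanteau-type argument that exploits assumption \ref{ass:h_cont}.

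First I would carry out the reduction from $U$- to $V$-statistics. Define
$V_{m,n,h}(\omega):=\int_{S^m}h\,d\mu_{n,\omega}^{\otimes m}=n^{-m}\sum_{i_1,\dots,i_m=1}^{n}h(X_{i_1},\dots,X_{i_m})$. Using symmetry \ref{ass:h_sym}, $m!\binom{n}{m}U_{m,n,h}$ equals the sum of $h(X_{i_1},\dots,X_{i_m})$ over tuples with distinct coordinates, while $n^{m}V_{m,n,h}$ sums over all tuples in $\intent{1,n}^m$. The tuples with at least one coincidence number $O(n^{m-1})$, so boundedness \ref{ass:h_borne} together with $m!\binom{n}{m}/n^m\to 1$ gives $U_{m,n,h}(\omega)-V_{m,n,h}(\omega)\to 0$ pointwise. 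It therefore suffices to show $V_{m,n,h}(\omega)\to I_m(S,h,\omega)$ for almost every $\omega$. For weak convergence of the empirical measures, I fix a countable family $\Fca$ of bounded continuous functions on $S$ that is convergence determining (such a family exists because $(S,d)$ is separable metric). For each $f\in\Fca$, Birkhoff's pointwise ergodic theorem applied to $\omega\mapsto f(X_0(\omega))$ yields
$\int f\,d\mu_{n,\omega}=\frac1n\sum_{i=1}^n f(X_i(\omega))\to\E{f(X_0)\mid\Ica}(\omega)=\int f\,d\mu_\omega$ almost surely, where the last equality follows from \eqref{eq:def_mu_omega} and the monotone class theorem. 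Intersecting the countably many full-measure sets provides a single null set outside of which $\mu_{n,\omega}\Rightarrow\mu_\omega$.

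Next I would transfer this to the product: weak convergence of probability measures on $S$ implies weak convergence of their $m$-fold products on $S^m$, so $\mu_{n,\omega}^{\otimes m}\Rightarrow\mu_\omega^{\otimes m}$ almost surely. Assumption \ref{ass:h_cont} states that $\mu_\omega^{\otimes m}(D(h))=I_m(S,\ind{D(h)},\omega)=0$ a.s., so the Portmanteau theorem applied to the bounded and $\mu_\omega^{\otimes m}$-a.e. continuous function $h$ gives
\[
V_{m,n,h}(\omega)=\int_{S^m} h\,d\mu_{n,\omega}^{\otimes m}\;\longrightarrow\;\int_{S^m} h\,d\mu_\omega^{\otimes m}=I_m(S,h,\omega),
\]
which together with Step 1 yields \eqref{eq:conv_ps}.

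The most delicate point is guaranteeing that the set function $A\mapsto\mu_\omega(A)$ really is, for almost every $\omega$, a bona fide Borel probability measure rather than a $\sigma$-additive object only defined up to null sets depending on $A$; this is the standard existence question for regular conditional distributions on $(S,\Bca(S))$, which is unproblematic when $S$ is Polish but requires some care on a general separable metric space. A related subtlety is that the choice of the countable convergence-determining class $\Fca$ must be compatible with Portmanteau in this generality; once these measure-theoretic scaffolding points are handled, the remaining steps are routine applications of the ergodic theorem and Portmanteau, with the symmetry assumption used only in the $U$-to-$V$ reduction.
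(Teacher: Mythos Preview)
Your proposal is correct and follows essentially the same route as the paper: the paper likewise reduces $U_{m,n,h}$ to the $V$-statistic $\int_{S^m} h\,d\nu_{n,\omega}^{\otimes m}$ via symmetry and boundedness, establishes $\nu_{n,\omega}\Rightarrow\mu_\omega$ almost surely by applying the ergodic theorem to a countable convergence-determining family of bounded continuous functions (available by separability), lifts this to the $m$-fold product via Billingsley's Theorem~3.2, and concludes by the mapping theorem for bounded $\mu_\omega^{\otimes m}$-a.e.\ continuous $h$. The measure-theoretic subtleties you flag (existence of $\mu_\omega$ as a genuine regular conditional distribution, compatibility of the determining class) are not addressed explicitly in the paper either.
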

This result extends Theorem~1 in \cite{dehling2023remarks} in two directions: first, the case of $U$-statistics of arbitrary order are considered. Second, we address here the not necessarity ergodic case.

When $\pr{X_{i}}_{i\geq 1}$ is ergodic, the measure $\mu_\omega$ is simply 
the distribution of $X_0$ hence the right hand side of \eqref{eq:conv_ps} 
can be simply expressed as $\E{h\pr{X_1^{\pr{1}},\dots,X_1^{\pr{m}}}}$, where 
$X_1^{\pr{1}},\dots,X_1^{\pr{m}}$ are independent copies of $X_1$. 

The symmetry assumption is needed in order to relate $U_{m,n,h}$ to a sum over a  rectangle and then see the convergence in \eqref{eq:conv_ps} as a convergence in distribution of product of random measures.

\subsection{Convergence in $\el^p$, $p\geq 1$}
In this subsection, we present sufficient conditions for the convergence 
in $\el^p$ of $\pr{U_{m,n,h}}_{n\geq 1}$.

We start by mentioning the following consequence of Theorem~\ref{thm:ergodic_theorem_sym_ps}.
\begin{Corollary}\label{cor:conv_Lp_sym}
Let$\pr{S,d}$ be a separable metric space, let $\pr{X_i}_{i\in\Z}=\pr{X_0\circ 
T^i}_{i\in\Z}$ be a strictly stationary sequence and let $p\geq 1$. Suppose that 
$h\colon S^m\to\R$ satisfies the following assumptions:
\begin{enumerate}[label=(A.2.\arabic*)]
\item\label{ass:h_sym_cor} $h$ is symmetric, that is, $h\pr{x_{\sigma\pr{1}},\dots,x_{\sigma\pr{m}}}=h\pr{x_1,\dots,x_m}$ for each $x_1,\dots,x_m\in S$ and each bijective 
$\sigma\colon\intent{1,m}\to\intent{1,m}$,
\item\label{ass:h_cor_UI} the family $\ens{\abs{h\pr{X_{i_1},\dots, 
X_{i_m}}}^p, 1\leq i_1<\dots<i_m}$ is uniformly integrable.
\item \label{ass:h_cor_limit_Lp} the following integral is finite:
\begin{equation}
\int_\Omega\int_{S^m} \abs{h\pr{x_1,\dots,x_m}}^p
d\mu_{\omega}\pr{x_1} \dots d\mu_{\omega}\pr{x_m}d\PP\pr{\omega}.
\end{equation}
\item\label{ass:h_cont_cor} for almost every $\omega\in\Omega$,
$I_m\pr{S,\ind{D\pr{h}},\omega}=0$, where $D\pr{h}$ denotes
the set of discontinuity points of $h$.
\end{enumerate}
Then the following convergence takes place:
\begin{equation}
\lim_{n\to\infty} \norm{U_{m,n,h} -I_m\pr{S,h,\cdot}}_p=0,
\end{equation}
where $I_m\pr{S,h,\cdot}$ is defined as in \eqref{eq:definition_of_I_h_omega_dot}.
\end{Corollary}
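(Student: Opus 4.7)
The plan is to reduce to the bounded case handled by Theorem~\ref{thm:ergodic_theorem_sym_ps} via a truncation of the kernel, and then close with a three-$\eps$ triangle inequality combining uniform integrability with dominated convergence. For $M>0$ I would introduce the truncation $h_M:=\phi_M\circ h$, where $\phi_M\colon\R\to\R$ is the continuous clipping map $\phi_M(t)=(-M)\vee(t\wedge M)$. Since $\phi_M$ is continuous, one has $D\pr{h_M}\subseteq D\pr{h}$, so $h_M$ is symmetric, bounded by $M$, and inherits assumption \ref{ass:h_cont_cor} from $h$. Theorem~\ref{thm:ergodic_theorem_sym_ps} thus yields $U_{m,n,h_M}\to I_m\pr{S,h_M,\cdot}$ almost surely; since both sides are bounded by $M$, dominated convergence upgrades this to $\norm{U_{m,n,h_M}-I_m\pr{S,h_M,\cdot}}_p\to 0$ as $n\to\infty$ for each fixed $M$.

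Next, I would control the truncation errors uniformly. Writing $U_{m,n,h}-U_{m,n,h_M}$ as the average of the terms $(h-h_M)\pr{X_{i_1},\ldots,X_{i_m}}$ over $\pr{i_1,\ldots,i_m}\in\inc^m_n$ and using convexity of $t\mapsto\abs{t}^p$, Jensen's inequality yields
\begin{equation*}
\norm{U_{m,n,h}-U_{m,n,h_M}}_p^p
\leq \sup_{1\leq i_1<\dots<i_m}
\E{\abs{h\pr{X_{i_1},\dots,X_{i_m}}}^p\ind{\abs{h\pr{X_{i_1},\dots,X_{i_m}}}>M}},
\end{equation*}
a bound that vanishes as $M\to\infty$ by the uniform integrability hypothesis \ref{ass:h_cor_UI}. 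For the limit, applying Jensen's inequality with respect to the probability measure $\mu_\omega^{\otimes m}$ and then integrating in $\omega$ gives
\begin{equation*}
\E{\abs{I_m\pr{S,h,\cdot}-I_m\pr{S,h_M,\cdot}}^p}
\leq \int_\Omega\int_{S^m}\abs{(h-h_M)\pr{x_1,\ldots,x_m}}^p d\mu_\omega\pr{x_1}\dots d\mu_\omega\pr{x_m}d\PP\pr{\omega},
\end{equation*}
which also tends to $0$ as $M\to\infty$ by dominated convergence, the integrand being dominated by $\abs{h}^p$ whose full integral is finite thanks to \ref{ass:h_cor_limit_Lp}.

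Given $\eps>0$, I would choose $M$ large enough that both truncation error bounds are at most $\eps/3$ (uniformly in $n$ for the first, and in $\omega$-expectation for the second), then pick $N$ so that $\norm{U_{m,n,h_M}-I_m\pr{S,h_M,\cdot}}_p\leq\eps/3$ for all $n\geq N$; the triangle inequality then concludes. The only delicate point is verifying that the truncated kernel still satisfies the a.e.\ continuity condition needed to invoke Theorem~\ref{thm:ergodic_theorem_sym_ps}, which rests on the elementary inclusion $D\pr{h_M}\subseteq D\pr{h}$ coming from continuity of the clipping map; the remaining ingredients are a routine combination of Jensen, uniform integrability, and dominated convergence.
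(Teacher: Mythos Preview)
Your proposal is correct and follows essentially the same route as the paper: truncate via $h_M=\phi_M\circ h$, apply Theorem~\ref{thm:ergodic_theorem_sym_ps} to the bounded kernel (using $D(h_M)\subseteq D(h)$) and upgrade to $\el^p$ by dominated convergence, then close with the same three-term triangle inequality, controlling the two truncation errors via \ref{ass:h_cor_UI} and \ref{ass:h_cor_limit_Lp} respectively. The only cosmetic differences are that the paper invokes Minkowski's inequality rather than Jensen for the $U$-statistic truncation error and phrases the limit-term control as monotone rather than dominated convergence; neither changes the substance.
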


One can wonder what happens if we remove the symmetry assumption. 
\begin{Theorem}\label{thm:conv_Lp_ergodique}
Let $\pr{S,d}$ be a separable metric space, let $\pr{X_i}_{i\in\Z}=\pr{X_0\circ 
T^i}_{i\in\Z}$ be a strictly stationary sequence taking values in $S$ and let 
$p\geq 1$. Suppose that $h\colon S^2\to\R$
and $\pr{X_0\circ T^i}_{i\in\Z}$ satisfy the following assumptions:
\begin{enumerate}[label=(A.3.\arabic*)]
\item\label{ass:UI_ergodique} the collection 
$\ens{\abs{h\pr{X_{i},X_{j}}}^p,1\leq i<j}$ 
is uniformly integrable.
\item\label{ass:h_cont_Lp} for almost every $\omega\in\Omega$, 
$I_m\pr{S,\ind{D\pr{h}},\omega}=0$, where $D\pr{h}$ denotes 
the set of discontinuity points of $h$. 
\item\label{ass:h_copies_ordre_p}  the following integral is finite: 
\begin{equation}
 \int_\Omega \int_{S^2}\abs{h\pr{x,y}}^pd\mu_\omega\pr{x}d\mu_\omega\pr{y}d\PP\pr{\omega}. 
\end{equation}
\end{enumerate}
Then the following convergence takes place:
\begin{equation}\label{eq:conv_Lp_non_sym}
\lim_{n\to\infty}\norm{U_{2,n,h}-I_2\pr{S,h,\cdot}}  _p=0,
\end{equation}
where  $I_2\pr{S,h,\cdot}$ is defined as in \eqref{eq:definition_of_I_h_omega_dot}.
\end{Theorem}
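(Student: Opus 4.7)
\emph{Proof plan.}
My plan is to reduce to bounded kernels via a uniform-integrability truncation, decompose $h$ into symmetric and antisymmetric parts, handle the symmetric part by Theorem~\ref{thm:ergodic_theorem_sym_ps}, and treat the antisymmetric part by a direct Birkhoff computation on elementary antisymmetric tensors followed by approximation.

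\emph{Truncation.} Given $\eta>0$, use \ref{ass:UI_ergodique} to pick $M$ with $\sup_{i<j}\E{\abs{h(X_i,X_j)}^p\ind{\ens{\abs{h(X_i,X_j)}>M}}}<\eta$, and set $h_M:=\phi_M\circ h$ for the continuous truncation $\phi_M(y):=\max(-M,\min(M,y))$. Then $h_M$ is bounded with $D(h_M)\subseteq D(h)$, so \ref{ass:h_cont_Lp} is preserved. The Jensen bound $\abs{U_{2,n,f}}^p\leq U_{2,n,\abs{f}^p}$ controls $\norm{U_{2,n,h-h_M}}_p$ uniformly in $n$, while \ref{ass:h_copies_ordre_p} and dominated convergence control $\norm{I_2(S,h-h_M,\cdot)}_p$. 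The proof therefore reduces to showing $\norm{U_{2,n,h_M}-I_2(S,h_M,\cdot)}_p\to 0$ for the fixed bounded $h_M$.

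\emph{Symmetric part.} Write $h_M=h_M^++h_M^-$ with $h_M^\pm(x,y)=\tfrac12(h_M(x,y)\pm h_M(y,x))$. Since the swap $\sigma(x,y):=(y,x)$ preserves $\mu_\omega\otimes\mu_\omega$, $D(h_M^\pm)\subseteq D(h_M)\cup\sigma(D(h_M))$ is $\mu_\omega\otimes\mu_\omega$-null a.s., and both parts are bounded. Theorem~\ref{thm:ergodic_theorem_sym_ps} applied to the symmetric $h_M^+$ yields $U_{2,n,h_M^+}\to I_2(S,h_M^+,\cdot)$ a.s., hence in $L^p$ by boundedness; antisymmetry of $h_M^-$ together with $\sigma$-invariance of the product measure forces $I_2(S,h_M^-,\cdot)=0$, so $I_2(S,h_M^+,\cdot)=I_2(S,h_M,\cdot)$.

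\emph{Antisymmetric part.} To prove $U_{2,n,h_M^-}\to 0$ in $L^p$, first consider the elementary antisymmetric tensor $g_{A,B}(x,y):=\ind{A}(x)\ind{B}(y)-\ind{B}(x)\ind{A}(y)$. The identity
\[
\binom{n}{2}U_{2,n,g_{A,B}}=T_n(A,B)-T_n(B,A),\qquad T_n(A,B):=\sum_{j=2}^{n}\ind{B}(X_j)\sum_{i=1}^{j-1}\ind{A}(X_i),
\]
combined with Birkhoff's theorem for $\ind{A}$ and $\ind{B}$ and an Abel summation yields $T_n(A,B)/n^2\to\tfrac12\mu_\omega(A)\mu_\omega(B)$ a.s.\ (and the same limit for $T_n(B,A)$), so $U_{2,n,g_{A,B}}\to 0$ a.s.\ and, by boundedness, in $L^p$. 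Linearity extends this to finite sums $g_\varepsilon=\sum_k c_k g_{A_k,B_k}$. Building such $g_\varepsilon$ from a deterministic partition of $S$ with $\mu_\omega$-null-boundary cells (derived from a fixed countable dense subset) gives $\norm{h_M^- - g_\varepsilon}_{L^p(\mu_\omega\otimes\mu_\omega)}\to 0$ for $\PP$-a.e.\ $\omega$; applying Theorem~\ref{thm:ergodic_theorem_sym_ps} to the symmetric bounded a.e.\ continuous kernel $\abs{h_M^- - g_\varepsilon}^p$ and invoking Jensen then yields
\[
\limsup_{n\to\infty}\norm{U_{2,n,h_M^-}}_p^p\leq\E{\int_{S^2}\abs{h_M^- - g_\varepsilon}^p d\mu_\omega(x)d\mu_\omega(y)},
\]
which vanishes as $\varepsilon\to 0$ by bounded convergence. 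The principal technical difficulty is the null-boundary partition construction for the random measure $\mu_\omega$, handled by a measurable-selection argument: each ball centre admits only countably many $\mu_\omega$-problematic radii per $\omega$, and separability of $S$ allows a joint measurable choice.
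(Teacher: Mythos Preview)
Your argument is correct, but it takes a genuinely different route from the paper. After the same truncation step, the paper does \emph{not} symmetrize. Instead it writes
\[
U_{2,n,h_R}=\frac{1}{\binom n2}\sum_{j=2}^n j\, d_{j,R}\circ T^j,\qquad d_{j,R}:=\frac1j\sum_{i=1}^{j-1}h_R(X_{-i},X_0),
\]
shows $d_{j,R}\to Y_R:=\int_S h_R(x,X_0)\,d\mu_\omega(x)$ a.s.\ and in $\el^p$ by the same empirical-measure/Portmanteau argument that underlies Theorem~\ref{thm:ergodic_theorem_sym_ps} (but applied to the one-sided averages $\frac1n\sum_{i=1}^n\delta_{X_{-i}}$), and then invokes a weighted ergodic lemma (Lemma~\ref{lem:weighted_averatges}) to pass from the Ces\`aro-weighted sum to $\E{Y_R\mid\Ica}$, which is finally identified with $I_2(S,h_R,\cdot)$.

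The trade-offs: the paper's route is shorter and avoids the simple-function approximation altogether; its weighted-average lemma is the same device used again in the proof of Theorem~\ref{thm:conv_Lp_non-ergodique_densite}, so it pays dividends elsewhere. Your route has the appeal of reducing the symmetric half entirely to Theorem~\ref{thm:ergodic_theorem_sym_ps}, but the antisymmetric half then carries all the work, and your approximation step is the most delicate part. Two small comments there: (i) what you call a ``measurable-selection argument'' is really simpler---for a fixed center $x$ one has $\E{\mu_\omega(\partial B(x,r))}=\PP_{X_0}(\partial B(x,r))$, which vanishes for all but countably many $r$, so deterministic radii can be chosen that give $\mu_\omega$-null boundaries for a.e.\ $\omega$ simultaneously; (ii) the convergence $\int_{S^2}\abs{h_M^--g_\varepsilon}^p\,d\mu_\omega\,d\mu_\omega\to 0$ should be argued for a.e.\ $\omega$ via pointwise convergence $g_\varepsilon\to h_M^-$ off $D(h_M^-)$ and bounded convergence, then averaged over $\omega$ by a second application of bounded convergence---this is what your final display actually needs.
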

This improves Theorem~2 in \cite{dehling2023remarks} under assumption (A.1) in the paper, since we do not require symmetry of the kernel.

One may wonder why we do not present a similar result for $U$-statistics of 
order $m$. A first idea would be an argument by induction on the dimension. In 
order to perform the induction step, say from $m=2$ to $m=3$, we would need to 
show, after a use of the weighted ergodic theorem,  
the convergence in  $\el^p$ of $\binom{n}{2}^{-1}\sum_{1\leq i<j\leq n}h\pr{X_{-j},X_{-i},X_0}$. Since we assume uniform integrability, it 
suffices to show the almost sure convergence, which could be established 
by seeing this almost sure convergence as that of a product of random measures. But without symmetry, we do not know whether the almost sure convergence of the sequence of random measures 
$\binom{n}{2}^{-1}\sum_{1\leq i<j\leq n}\delta_{\pr{X_{-j},X_{-i}}}$ takes place.

Let us now state a result on the convergence in $\el^p$ without imposing any continuity of the kernel, but making assumptions on the distribution 
of the vectors $\pr{X_{i_1},\dots,X_{i_m}}$ .

\begin{Theorem}\label{thm:conv_Lp_non-ergodique_densite}
Let $\pr{X_i}_{i\in\Z}=\pr{X_0\circ T^i}_{i\in\Z}$ be a strictly stationary 
sequence taking values in $\R^d$ and let $p\geq 1$. Suppose that $h\colon 
\pr{\R^d}^m\to\R$
and $\pr{X_0\circ T^i}_{i\in\Z}$ satisfy the following assumptions:
\begin{enumerate}[label=(A.4.\arabic*)]
\item\label{ass:Ui_dens} the collection 
$\ens{\abs{h\pr{X_{i_1},\dots,X_{i_m}}}^p,1\leq i_1<\dots<i_m}$ 
is uniformly integrable.
\item\label{ass:densite_bornee} for each $\pr{i_\ell}_{\ell\in\intent{1,m}}$ such that $1\leq i_1<\dots<i_m$, the vector 
$\pr{X_{i_1},\dots,X_{i_m}}$ has a density $f_{i_1,\dots,i_m}$  
and there exists a $q_0>1$ such that 
\begin{equation}
M_1:=\sup_{\pr{i_\ell}_{\ell\in\intent{1,m}}:1\leq i_1<\dots<i_m}
\int_{\pr{\R^d}^m}f_{i_1,\dots,i_m}\pr{t_1,\dots,t_m}^{q_0}dt_1\dots dt_m<\infty.
\end{equation}
\item\label{ass:den_bornee_m_omega} For almost every $\omega$, the measure $\mu_{\omega}$ defined as in \eqref{eq:def_mu_omega} admits a density 
$f_\omega$ with respect to the Lebesgue measure and there exists 
a set $\Omega'$ having probability one and $q_1>1$ for which 
\begin{equation}
M_2:=\sup_{\omega\in\Omega'}\int_{\R^d}f_{\omega}\pr{t}^{q_1}dt<\infty.
\end{equation}
\item \label{ass:inte_limite} the following integral is finite:
\begin{equation}
\int_{\Omega}\int_{\pr{\R^d}^m}\abs{h\pr{x_1,\dots,x_m}}^pd\mu_\omega\pr{x_1}\dots d\mu_{\omega}\pr{x_m}d\PP\pr{\omega} .
\end{equation}
\end{enumerate}
Then the following convergence hold:
\begin{equation}\label{eq:thm_erg_densite}
\lim_{n\to\infty}\norm{U_{m,n,h}-I_m\pr{\R^d,h,\cdot} }_p=0,
\end{equation}
where $I_m\pr{\R^d,h,\cdot}$ is defined as in \eqref{eq:definition_of_I_h_omega_dot}.
\end{Theorem}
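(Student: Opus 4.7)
The strategy, suggested in the introduction, is to reduce by successive approximations to the case where the kernel is a linear combination of indicators of product sets, then handle that case inductively via the Birkhoff ergodic theorem together with a weighted version obtained by Abel summation.

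\emph{Reductions.} By \ref{ass:Ui_dens} the truncation $h_K:=h\ind{\abs{h}\leq K}$ satisfies $\sup_{i_1<\dots<i_m}\norm{\pr{h-h_K}\pr{X_{i_1},\dots,X_{i_m}}}_p\to 0$ as $K\to\infty$, which by Minkowski's inequality applied to the average defining the $U$-statistic upgrades to $\sup_n\norm{U_{m,n,h}-U_{m,n,h_K}}_p\to 0$; dominated convergence applied to \ref{ass:inte_limite} also yields $\norm{I_m\pr{\R^d,h-h_K,\cdot}}_p\to 0$. A further truncation of $h_K$ to the cube $[-R,R]^{dm}$ is controlled by tightness of the law of $X_0$ together with a union bound over the $m$ coordinates, so one may assume $h$ is bounded with compact support. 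Setting $r:=\max\pr{pq_0',pq_1'}$, where $q_i'$ denotes the conjugate exponent of $q_i$, approximate such an $h$ in $\el^r((\R^d)^m,\mathrm{Leb})$ to arbitrary accuracy by a kernel of the form $\tilde h=\sum_{\ell=1}^N c_\ell\prod_{k=1}^m\ind{B_{\ell,k}}$. The density bound \ref{ass:densite_bornee} and Hölder's inequality then yield
\begin{equation*}
\sup_{i_1<\dots<i_m}\E{\abs{\pr{h-\tilde h}\pr{X_{i_1},\dots,X_{i_m}}}^p}\leq M_1^{1/q_0}\norm{h-\tilde h}_{pq_0'}^p,
\end{equation*}
and the parallel estimate using \ref{ass:den_bornee_m_omega} bounds $\norm{I_m\pr{\R^d,h-\tilde h,\cdot}}_p$ by a multiple of $\norm{h-\tilde h}_{pq_1'}$. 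By linearity of $U_{m,n,\cdot}$ in the kernel, it therefore suffices to establish the convergence for $h=\prod_{k=1}^m\ind{A_k}$, in which case the expected limit is $\prod_{k=1}^m\mu_\omega\pr{A_k}$.

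\emph{The product-indicator case, by induction on $m$.} The case $m=1$ is Birkhoff's theorem in $\el^p$. For the induction step, set $h':=\prod_{k=1}^{m-1}\ind{A_k}$ and sum out the largest index $i_m=j$ to obtain
\begin{equation*}
\binom{n}{m} U_{m,n,h}=\sum_{j=m}^n \ind{X_j\in A_m}\binom{j-1}{m-1}U_{m-1,j-1,h'}.
\end{equation*}
Writing $U_{m-1,j-1,h'}=\prod_{k=1}^{m-1}\mu_\omega\pr{A_k}+\eta_j$ with $\norm{\eta_j}_p\to 0$ by the inductive hypothesis, Minkowski's inequality together with the hockey-stick identity $\sum_{j=m}^n\binom{j-1}{m-1}=\binom{n}{m}$ shows that the $\eta_j$ contribution is $o(1)$ in $\el^p$. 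What remains is the weighted ergodic average $\binom{n}{m}^{-1}\sum_{j=1}^n\binom{j-1}{m-1}\ind{X_j\in A_m}$, which by Abel summation from the $\el^p$ version of Birkhoff's theorem converges in $\el^p$ to $\mu_\omega\pr{A_m}$, closing the induction.

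\emph{Main obstacle.} The main technical point is precisely this product-indicator step: the absence of symmetry of the kernel prevents a direct appeal to Corollary~\ref{cor:conv_Lp_sym} and forces the induction through a weighted ergodic average. Obtaining $\el^p$ (rather than merely almost sure) convergence of that weighted average requires an $\el^p$-uniform version of the Birkhoff remainder estimate, and the inductive bookkeeping must be arranged so that the error $\eta_j$ propagated at stage $m$ is genuinely $o(1)$ in $\el^p$ after averaging against the binomial weights.
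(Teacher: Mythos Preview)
Your proof is correct and follows essentially the same route as the paper: reduce via truncation and the H\"older/density estimates \ref{ass:densite_bornee}--\ref{ass:den_bornee_m_omega} to kernels of the form $\prod_{k}\ind{A_k}$, then handle that case by induction on $m$ using the weighted ergodic theorem obtained from Abel summation (the paper's Lemma~\ref{lem:weighted_averatges}). The only cosmetic differences are that the paper treats the product-indicator case first and organises the induction by shifting to the backward sequence $(X_{-k})$ before applying Lemma~\ref{lem:weighted_averatges}, whereas you split $U_{m-1,j-1,h'}$ directly into limit plus remainder; the content is the same.
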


Assumption~\ref{ass:densite_bornee} is needed in order 
to approximate $h$ by a linear combination of indicator functions 
of produts of Borel sets, uniformly with respect to the distribution of 
$\pr{X_{i_1},\dots,X_{i_m}}$. 

Our Theorem~\ref{thm:conv_Lp_non-ergodique_densite} improves Theorem~2 in \cite{dehling2023remarks} under assumption (A.2) in the following directions. First, we provide a result for $U$-statistics of arbitrary order. Second, the not necessarily ergodic case is addressed. Third, even in the ergodic case, our assumption only require a uniform control on the 
$\el^{q_1}$ norm of the densities instead of a uniform bound.

\subsection{Examples}\label{subsec:examples}
In this Subsection, we give Examples where the conditions of 
Corollary~\ref{cor:conv_Lp_sym}, Theorems~\ref{thm:ergodic_theorem_sym_ps} and 
\ref{thm:conv_Lp_non-ergodique_densite} are satisfied.
\begin{Corollary}\label{cor:kernel_test_symmetry}
 Let $h\colon\R^3\to\R$ be the kernel defined as 
 \begin{equation}\label{eq:kernel_test_symmetry}
  h\pr{x_1,x_2,x_3}=\sgn{2x_1-x_2-x_3}+\sgn{2x_2-x_1-x_3}+\sgn{2x_3-x_1-x_2},
 \end{equation}
where $\sgn{x}$ equals $1$ if $x>0$, $-1$ if $x<0$ and $\sgn{0}=0$. 
Let $\pr{X_i}_{i\in\Z}=\pr{X_0\circ T^i}_{i\in\Z}$ be a strictly stationary 
sequence of real valued random variables such that for each $x_0\in\R$, 
$\PP\pr{X_0=x_0}=0$. Then for almost every $\omega\in\Omega$, the 
convergence 
\begin{equation}
 U_{3,n,h}\pr{\omega}\to I_3\pr{\R,h,\omega}
\end{equation}
takes place, where $I_3\pr{\R,h,\omega}$ is defined as in 
\eqref{eq:definition_of_I_h_omega}.
\end{Corollary}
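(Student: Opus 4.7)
The plan is to apply Theorem~\ref{thm:ergodic_theorem_sym_ps} to the kernel $h$, which means checking assumptions~\ref{ass:h_sym}, \ref{ass:h_borne} and~\ref{ass:h_cont}. Symmetry is transparent, since the three summands form the orbit of $\sgn{2x_1-x_2-x_3}$ under permutations of $(x_1,x_2,x_3)$. Boundedness is immediate: each sign function takes values in $\ens{-1,0,1}$, so $\abs{h}\le 3$.

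The substantive step is~\ref{ass:h_cont}. Write $a_i(x_1,x_2,x_3)=2x_i-x_j-x_k$ for indices with $\ens{i,j,k}=\ens{1,2,3}$. Because the sign function is continuous on $\R\setminus\ens{0}$, the discontinuity set of $h$ is contained in the union of the three hyperplanes $H_i=\ens{(x_1,x_2,x_3)\in\R^3:a_i=0}$. The invariance of $\mu_\omega^{\otimes 3}$ under coordinate permutations reduces the task to showing $\mu_\omega^{\otimes 3}(H_1)=0$ for $\PP$-a.e.\ $\omega$. Fubini's theorem rewrites this as
\[
\mu_\omega^{\otimes 3}(H_1)=\int_\R\!\int_\R \mu_\omega\bigl(\ens{(x_2+x_3)/2}\bigr)\,d\mu_\omega(x_2)\,d\mu_\omega(x_3),
\]
which vanishes as soon as $\mu_\omega$ is atomless.

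The hard part is therefore to deduce that for $\PP$-almost every $\omega$, $\mu_\omega$ has no atom, starting from the hypothesis that $\PP(X_0=x_0)=0$ for every $x_0\in\R$. Using $\mu_\omega(\ens{x_0})=\E{\ind{X_0=x_0}\mid\Ica}$, the hypothesis yields $\mu_\omega(\ens{x_0})=0$ almost surely for each fixed $x_0$. The real obstacle is pooling these individually null events over the uncountable family of $x_0\in\R$ into a single exceptional $\PP$-null set; I plan to address it by exploiting the joint measurability of $(\omega,x)\mapsto\mu_\omega(\ens{x})$ and the fact that each $\mu_\omega$ has at most countably many atoms, so that a regular version of $\mu_\omega$ can be selected whose atom structure depends measurably on $\omega$ and the exceptional sets coalesce. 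Once the almost sure atomlessness of $\mu_\omega$ is established,~\ref{ass:h_cont} holds, and Theorem~\ref{thm:ergodic_theorem_sym_ps} delivers the claimed almost sure convergence $U_{3,n,h}(\omega)\to I_3(\R,h,\omega)$.
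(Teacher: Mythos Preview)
Your strategy mirrors the paper's: invoke Theorem~\ref{thm:ergodic_theorem_sym_ps}, dispose of \ref{ass:h_sym} and \ref{ass:h_borne}, bound $D(h)$ by the three hyperplanes $H_i=\{a_i=0\}$, and reduce $\mu_\omega^{\otimes 3}(H_i)$ via Fubini to the $\mu_\omega$-mass of a single point. Whether one integrates out $x_1$ (your choice) or $x_3$ (the paper's) is immaterial. You are also right to flag the passage from ``$\mu_\omega(\{t\})=0$ a.s.\ for each fixed $t$'' to the integrated statement as the delicate step; the paper is terse here and simply records that $\E{\ind{X_0=2x_1-x_2}\mid\Ica}$ has zero expectation, hence vanishes $\PP$-a.s., for each fixed $(x_1,x_2)$.

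Where your proposal breaks down is the plan to resolve this by proving that $\mu_\omega$ is atomless for $\PP$-a.e.\ $\omega$. Under the corollary's hypotheses this is simply false. If $T$ is the identity on $\Omega$, then every set is invariant, $\Ica=\Fca$, and $\mu_\omega=\delta_{X_0(\omega)}$ is a Dirac mass for every $\omega$; yet the hypothesis $\PP(X_0=x_0)=0$ for all $x_0$ can certainly hold (take $X_0$ uniform on $[0,1]$). No measurable-selection or coalescing-of-null-sets argument will manufacture atomlessness in this situation, so your proposed route to~\ref{ass:h_cont} cannot be completed. In the same example one has $\mu_\omega^{\otimes 3}(H_1)=1$, so~\ref{ass:h_cont} itself fails and Theorem~\ref{thm:ergodic_theorem_sym_ps} is inapplicable; the paper's terse argument faces the very same obstruction in the non-ergodic case, even though the corollary's conclusion happens to hold trivially there (all $X_i$ coincide and both sides equal $h(X_0,X_0,X_0)=0$).
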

The kernel $h$ defined by \eqref{eq:kernel_test_symmetry} is used in order to
test symmetry of a distribution.
\\
The next example deals with the distance covariance, which is used in order
to test independence between two samples.
\begin{Corollary}\label{cor:distance_covariance}
 Let $\pr{S,d}$ be a separable metric space and let $p\geq 1$.
 Define $f\colon S^4\to \R$ by 
 \begin{equation}
  f\pr{z_1,z_2,z_3,z_4}=d\pr{z_1,z_2}-d\pr{z_1,z_3}-d\pr{z_2,z_4}+
  d\pr{z_3,z_4},
 \end{equation}
 the kernel $g\colon \pr{S^2}^6\to\R$ by 
\begin{equation}
 g\pr{\pr{x_1,y_1},\dots, \pr{x_6,y_6}}=f\pr{x_1,x_2,x_3,x_4}
 f\pr{y_1,y_2,y_3,y_4}
\end{equation}
and its symmetrized version 
\begin{equation}
 h\pr{\pr{x_1,y_1},\dots, \pr{x_6,y_6}}=\frac 1{6!}\sum_{\sigma\in\Sca_6}
  g\pr{\pr{x_{\sigma\pr{1}},y_{\sigma\pr{1}}},\dots, 
\pr{x_{\sigma\pr{6}},y_{\sigma\pr{6}}}}.
\end{equation}
Let $\pr{\pr{X_i,X'_i}}_{i\in\Z}=\pr{\pr{X_0,X'_0}\circ T^i}_{i\in\Z}$ be a 
strictly stationary sequence with values in $S^2$, where $\pr{X_0\circ 
T^i}_{i\in\Z}$ is independent of $\pr{X'_0\circ 
T^i}_{i\in\Z}$.
 Suppose that 
 $\E{d\pr{X_0,x_0}^p}+\E{d\pr{X'_0,x_0}^p}$ is finite for some (hence all)
$x_0\in S$.
Then the following convergence takes place 
\begin{equation}
 \lim_{n\to\infty}\norm{U_{6,n,h}-I_6\pr{S^2,h,\cdot}}_p=0,
\end{equation}
where $I_6\pr{S^2,h,\cdot}$ is defined by 
\begin{equation}
 I_6\pr{S^2,h,\omega}=\int_{S^6}h\pr{\pr{x_1,y_1},\dots,\pr{x_6,y_6}}
 d\mu_\omega\pr{x_1,y_1}\dots d\mu_\omega\pr{x_6,y_6}
\end{equation}
and for $A\in\Bca\pr{S^2}$, 
\begin{equation}
 \mu_\omega\pr{A}=\E{\ind{\pr{X_0,X'_0}\in A } \mid\Ica }\pr{\omega}.
\end{equation}
\end{Corollary}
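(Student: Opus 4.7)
The strategy is to apply Corollary~\ref{cor:conv_Lp_sym} to the kernel $h$, viewed as a symmetric kernel of order $m=6$ on the state space $S^2$, evaluated on the strictly stationary sequence $Z_i:=\pr{X_i,X'_i}=Z_0\circ T^i$. Assumption~\ref{ass:h_sym_cor} is built into the definition of $h$ as a symmetrization over $\Sca_6$, and since $f$ is a continuous linear combination of distance evaluations, $g$ and hence $h$ are jointly continuous on $\pr{S^2}^6$; thus $D\pr{h}=\emptyset$ and~\ref{ass:h_cont_cor} is trivially satisfied. The work is then to verify the two moment conditions \ref{ass:h_cor_UI} and \ref{ass:h_cor_limit_Lp}.

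The foundation for both is the triangle-inequality estimate $\abs{f\pr{z_1,z_2,z_3,z_4}}\leq 2\sum_{k=1}^4 d\pr{z_k,x_0}$, valid for every $x_0\in S$, which plugged into the definition of $g$ and averaged over $\Sca_6$ yields
\begin{equation*}
\abs{h\pr{\pr{x_1,y_1},\dots,\pr{x_6,y_6}}}\leq 4\,D_x\,D_y,\quad D_x:=\sum_{k=1}^6 d\pr{x_k,x_0},\ D_y:=\sum_{k=1}^6 d\pr{y_k,x_0}.
\end{equation*}
For~\ref{ass:h_cor_UI}, stationarity ensures that $\ens{d\pr{X_i,x_0}^p,i\in\Z}$ is uniformly integrable (its members are identically distributed and integrable), so the inequality $D_x^p\leq 6^{p-1}\sum_k d\pr{X_{i_k},x_0}^p$ shows that $\ens{D_x^p}$ is also uniformly integrable, and similarly for $\ens{D_y^p}$. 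Since $\pr{X_i}_{i\in\Z}$ and $\pr{X'_i}_{i\in\Z}$ are independent, so are $D_x$ and $D_y$; the splitting $\ens{D_xD_y>\sqrt K}\subseteq\ens{D_x>K^{1/4}}\cup\ens{D_y>K^{1/4}}$ followed by factorization of expectations via Fubini yields uniform integrability of $\ens{D_x^pD_y^p}$, and hence of the dominated family $\ens{\abs{h\pr{Z_{i_1},\dots,Z_{i_6}}}^p,1\leq i_1<\dots<i_6}$.

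For~\ref{ass:h_cor_limit_Lp}, on a suitably enlarged probability space I would introduce random variables $\tilde Z_k=\pr{U_k,V_k}$, $k\in\intent{1,6}$, that are $\Ica$-conditionally i.i.d.\ with regular conditional law $\mu_\omega$, so that
\begin{equation*}
\int_\Omega\int_{\pr{S^2}^6}\abs{h}^p\,d\mu_\omega^{\otimes 6}\,d\PP\pr{\omega}=\E{\abs{h\pr{\tilde Z_1,\dots,\tilde Z_6}}^p}.
\end{equation*}
The key bound reduces matters to finiteness of $\E{d\pr{U_k,x_0}^p d\pr{V_l,x_0}^p}$ for every pair $\pr{k,l}\in\intent{1,6}^2$. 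When $k=l$, the joint law of $\pr{U_k,V_k}$ is the conditional law of $\pr{X_0,X'_0}$ given $\Ica$, so this expectation equals $\E{d\pr{X_0,x_0}^p d\pr{X'_0,x_0}^p}=\E{d\pr{X_0,x_0}^p}\E{d\pr{X'_0,x_0}^p}$ by the independence of $X_0$ and $X'_0$. The main obstacle is the off-diagonal case $k\neq l$: conditioning on $\Ica$ factorizes the expectation into $\E{\E{d\pr{X_0,x_0}^p\mid\Ica}\E{d\pr{X'_0,x_0}^p\mid\Ica}}$. The crucial observation, derived from Birkhoff's ergodic theorem, is that $\E{d\pr{X_0,x_0}^p\mid\Ica}$ is the a.s.\ limit of the Cesàro averages of $d\pr{X_i,x_0}^p$ and therefore admits a version measurable with respect to $\Ica\cap\sigma\pr{X_i,i\in\Z}$; by symmetry, $\E{d\pr{X'_0,x_0}^p\mid\Ica}$ admits a version measurable with respect to $\Ica\cap\sigma\pr{X'_i,i\in\Z}$. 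Since $\sigma\pr{X_i,i\in\Z}$ and $\sigma\pr{X'_i,i\in\Z}$ are independent, the two conditional expectations are independent random variables, and the off-diagonal expectation factorizes as $\E{d\pr{X_0,x_0}^p}\E{d\pr{X'_0,x_0}^p}<\infty$. All four hypotheses of Corollary~\ref{cor:conv_Lp_sym} being verified, the claimed $\el^p$ convergence of $U_{6,n,h}$ to $I_6\pr{S^2,h,\cdot}$ follows.
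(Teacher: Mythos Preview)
Your proof is correct and follows essentially the same route as the paper: both apply Corollary~\ref{cor:conv_Lp_sym} with $m=6$ on the state space $S^2$, dispose of \ref{ass:h_sym_cor} and \ref{ass:h_cont_cor} by construction and continuity, and verify the two moment conditions via the triangle-inequality bound $\abs{f}\leq 2\sum_k d\pr{z_k,x_0}$ together with independence of the $X$- and $X'$-processes. Your treatment of the off-diagonal term in \ref{ass:h_cor_limit_Lp}, where you use Birkhoff's theorem to realize $\E{d\pr{X_0,x_0}^p\mid\Ica}$ as a $\sigma\pr{X_i,i\in\Z}$-measurable limit and thereby justify independence of the two conditional expectations, is more explicit than the paper's, which records only the identity $\int u\pr{x}v\pr{y}\,d\mu_\omega\pr{x,y}=\E{u\pr{X_0}v\pr{X'_0}\mid\Ica}\pr{\omega}$ and integrability of $d\pr{X_0,x_0}^p d\pr{X'_0,x_0}^p$ without spelling out the product-measure case.
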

Finally, let us give a framework where
Theorem~\ref{thm:conv_Lp_non-ergodique_densite} applies.
\begin{Corollary}\label{cor:gaussian}
 Let $h\colon\R^m\to \R$ be a measurable map and let $p\geq 1$.
Let $\pr{X_i}_{i\in\Z}$ be a strictly stationary Gaussian sequence. 
Denote by $\Sigma\pr{i_1,\dots,i_m}$ the covariance matrix of the vector 
$\pr{X_{i_1},\dots,X_{i_m}}$. Suppose that 
\begin{equation}\label{eq:det_matrice_cov}
 \inf_{1\leq i_1<\dots<i_m}\det\pr{\Sigma\pr{i_1,\dots,i_m}}>0,
\end{equation}
\begin{equation}\label{eq:cond_ergodicite}
 \lim_{N\to\infty}\frac 1N\sum_{i=1}^N\abs{\operatorname{Cov}\pr{X_0,X_i}}
 =0 \mbox{ and}
\end{equation}
\begin{equation}\label{eq:integrability_example_gaussian}
 \E{\abs{h\pr{X_0^{\pr{1} },\dots,X_0^{\pr{m} }  }}^p  }< \infty,
\end{equation}
where $X_0^{\pr{1} },\dots, X_0^{\pr{m} }$ are independent copies of $X_0$
and that \ref{ass:Ui_dens} holds. Then 
\eqref{eq:thm_erg_densite} takes place.
\end{Corollary}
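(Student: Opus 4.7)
The plan is to verify the four hypotheses of Theorem~\ref{thm:conv_Lp_non-ergodique_densite} for this Gaussian setup; \ref{ass:Ui_dens} is already assumed, so only \ref{ass:densite_bornee}, \ref{ass:den_bornee_m_omega} and \ref{ass:inte_limite} remain. The strategy is simple: a direct Gaussian computation handles the uniform density bound \ref{ass:densite_bornee}, and an ergodicity argument based on the spectral measure reduces the other two conditions to \eqref{eq:integrability_example_gaussian}.

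For \ref{ass:densite_bornee}, I would pick any $q_0>1$ and use the explicit form of the $\el^{q_0}$-norm of a multivariate Gaussian density. If $f_{i_1,\dots,i_m}$ denotes the density of the Gaussian vector $\pr{X_{i_1},\dots,X_{i_m}}$ with covariance $\Sigma(i_1,\dots,i_m)$, then since the $\el^{q_0}$-norm is translation invariant, a direct computation of the Gaussian integral gives
\begin{equation*}
\int_{\R^m}f_{i_1,\dots,i_m}(t)^{q_0}\,dt=(2\pi)^{-m(q_0-1)/2}\,q_0^{-m/2}\,\pr{\det\Sigma(i_1,\dots,i_m)}^{(1-q_0)/2}.
\end{equation*}
Since $(1-q_0)/2<0$, the positive lower bound on $\det\Sigma(i_1,\dots,i_m)$ supplied by \eqref{eq:det_matrice_cov} translates immediately into the required uniform upper bound.

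The main content lies in deducing ergodicity from \eqref{eq:cond_ergodicite}. Let $\nu$ denote the spectral measure of $\pr{X_j}_{j\in\Z}$, so that $\cov{X_0}{X_j}=\int_{-\pi}^{\pi}e^{ij\lambda}\,d\nu(\lambda)$. By Wiener's theorem,
\begin{equation*}
\frac{1}{N}\sum_{j=1}^{N}\abs{\cov{X_0}{X_j}}^2\xrightarrow[N\to\infty]{}\sum_{\lambda}\nu(\{\lambda\})^2,
\end{equation*}
and since $\abs{\cov{X_0}{X_j}}$ is bounded, assumption \eqref{eq:cond_ergodicite} forces the Cesàro averages on the left to vanish, so $\nu$ has no atoms. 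Maruyama's classical theorem then yields ergodicity of $\pr{X_j}_{j\in\Z}$. Consequently, $\mu_\omega=\PP_{X_0}$ almost surely; the variance of $X_0$ is positive (otherwise $\Sigma(i_1,\dots,i_m)$ would have a zero row, contradicting \eqref{eq:det_matrice_cov}), so $\PP_{X_0}$ is Gaussian with bounded density, which gives \ref{ass:den_bornee_m_omega} for any $q_1>1$. Moreover, under $\mu_\omega^{\otimes m}$ the integral in \ref{ass:inte_limite} equals $\E{\abs{h(X_0^{(1)},\dots,X_0^{(m)})}^p}$, finite by \eqref{eq:integrability_example_gaussian}.

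With every assumption verified, Theorem~\ref{thm:conv_Lp_non-ergodique_densite} directly delivers \eqref{eq:thm_erg_densite}. The only conceptually nontrivial step is the passage from \eqref{eq:cond_ergodicite} through Wiener's theorem to the atomlessness of the spectral measure and then to ergodicity via Maruyama; everything else reduces to standard Gaussian calculations.
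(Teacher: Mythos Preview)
Your proof is correct and follows the same route as the paper: verify the hypotheses of Theorem~\ref{thm:conv_Lp_non-ergodique_densite}, using \eqref{eq:det_matrice_cov} for the uniform density control and Maruyama's criterion (via \eqref{eq:cond_ergodicite}) for ergodicity, which reduces $\mu_\omega$ to $\PP_{X_0}$ and makes \ref{ass:den_bornee_m_omega} and \ref{ass:inte_limite} immediate from \eqref{eq:integrability_example_gaussian}. The paper's own proof is terser---it simply observes that the densities are uniformly bounded and cites Maruyama directly---whereas you spell out the explicit $\el^{q_0}$ Gaussian computation and the passage through Wiener's theorem to atomlessness of the spectral measure; both are valid and amount to the same argument.
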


\section{Proofs}

\subsection{Proof of Theorem~\ref{thm:ergodic_theorem_sym_ps}}
The symmetry assumption guarantees the following decomposition
\begin{equation}
U_{m,n,h}=\frac{1}{m!\binom{n}m}\sum_{i_1,\dots,i_m=1}^n 
h\pr{X_{i_1},\dots,X_{i_m}} - \frac 
1{ m! \binom{n}m}\sum_{\pr{i_\ell}_{\ell\in\intent{1,m}}\in J_n  
}h\pr{X_{i_1},\dots,X_{i_m}}, 
\end{equation}
where $J_n$ denotes the set of elements $\pr{i_\ell}_{\ell\in\intent{1,m}}
\in\intent{1,n}^m$ for which there exist at least two distinct indexes $\ell$ and $\ell'$ for which $i_\ell=i_{\ell'}$. Since $h$ is bounded and 
$\operatorname{Card}\pr{J_n}/\binom{n}m$ goes to $0$ as $n$ goes to infinity, 
it suffices to prove that for almost every $\omega\in\Omega$, 
\begin{equation}
\lim_{n\to\infty}\frac 1{n^m}\sum_{i_1,\dots,i_m=1}^n 
h\pr{X_{i_1}\pr{\omega},\dots,X_{i_m}\pr{\omega}}=
\int_{S^m}h\pr{x_1,\dots,x_m}d\mu_\omega\pr{x_1}\dots d\mu_{\omega}\pr{x_m},
\end{equation}
where $\mu_\omega$ is defined as in \eqref{eq:def_mu_omega}.
Observe that for each $\omega\in\Omega$, 
\begin{equation}
\frac 1{n^m}\sum_{i_1,\dots,i_m=1}^n 
h\pr{X_{i_1}\pr{\omega},\dots,X_{i_m}\pr{\omega}}=
\int_{S^m}h\pr{x_1,\dots,x_m}d\nu_{n,\omega}\pr{x_1} \dots d\nu_{n,\omega}\pr{x_m},
\end{equation}
where 
\begin{equation}
\nu_{n,\omega}=\frac 1n\sum_{i=1}^n \delta_{X_i\pr{\omega}}.
\end{equation}

Separability of $S$ guarantees the existence of a countable collection 
$\pr{f_k}_{k\geq 1}$ of continuous and bounded functions from $S$ to $\R$ 
such that a sequence $\pr{\mu_n}_{n\geq 1}$ of probability measures converges weakly to 
a probability measure $\mu$ if and only if for each $k\geq 1$, 
$\int f_kd\mu_n\to \int f_kd\mu$. By the ergodic theorem, we know that for each 
$k\geq 1$, there exists a set $\Omega_k$ having probability one for which 
the convergence
\begin{equation}
\lim_{n\to\infty}\int f_k\pr{x}d \nu _{n,\omega}
=\lim_{n\to\infty}\frac 1n\sum_{j=1}^n f_k\pr{X_j\pr{\omega}}
=\E{f_k\pr{X_0}\mid\Ica}\pr{\omega}=\int f_k\pr{x}d\mu_{\omega}\pr{x}.
\end{equation}
holds for each $\omega\in\Omega_k$. Therefore, for each $\omega$ belonging to the set of probability one 
$\Omega':=\bigcap_{k\geq 1}\Omega_k$, the sequence 
$\pr{ \nu_{n,\omega}}_{n\geq 1}$ 
converges weakly to $\mu_\omega$.

Recall that Theorem 3.2 (page 21) of \cite{MR0233396} shows that 
if $\mu_n\to\mu$ and $\mu'_n\to\mu'$ in distribution on metric spaces $S_1$ and $S_2$ respectively, then 
$\mu_n\times\mu'_n\to \mu\times \mu'$ in distribution on $S_1\times S_2$. Applying 
inductively this result and using assumptions~\ref{ass:h_borne} and 
\ref{ass:h_cont} shows that for each $\omega\in\Omega'$, \eqref{eq:conv_ps} 
holds. Indeed, we know that if $S'$ is a separable metric space, 
$\pr{m_n}_{n\geq 1}$ is a sequence of probability measures  which converges 
weakly to $m$ and $g\colon S'\to\R$ is bounded and $m\pr{D\pr{g}}=0$, then 
$\int g dm_n\to \int g dm$. We use this for each fixed $\omega\in\Omega'$ with 
$M_n=\nu_{n,\omega}\times\dots\times\nu_{n,\omega}$, $S'=S^m$ and $g=h$.

\subsection{Proof of Corollary~\ref{cor:conv_Lp_sym}}

Let $h_R$ be as in \eqref{eq:def_de_hR}. Observe that 
assumption \ref{ass:h_cor_limit_Lp} guarantee that $I_m\pr{S,h,\cdot}$ defined as 
in \eqref{eq:definition_of_I_h_omega} belongs to $\el^p$. Moreover, 
the triangle inequality implies 
\begin{equation*}
\norm{U_{m,n,h}-I_m\pr{S,h,\cdot}}_p\leq \norm{U_{m,n,h}- U_{m,n,h_R}}_p
+\norm{U_{m,n,h_R}-I_m\pr{S,h_R,\cdot}}_p+\norm{I_m\pr{S,h,\cdot}-I_m\pr{S,h_R,\cdot}}_p.
\end{equation*}
Using assumption~\ref{ass:h_cor_UI} combined with the triangle inequality, 
one gets 
\begin{equation*}
\lim_{R\to\infty}\sup_{n\geq m}\norm{U_{m,n,h}- U_{m,n,h_R}}_p=0.
\end{equation*}
 Moreover, assumption \ref{ass:h_cor_limit_Lp} combined with monotone convergence shows that $$\lim_{R\to\infty}\norm{I_m\pr{S,h,\cdot}-I_m\pr{S,h_R,\cdot}}_p=0$$
 hence it suffices to show that for each fixed $R>0$, 
 $\norm{U_{m,n,h_R} -I_m\pr{S,h_R,\cdot}}_p\to 0$ as $n$ goes to infinity. 
This follows 
 from an application of Theorem~\ref{thm:ergodic_theorem_sym_ps} 
 with $h$ replaced by $h_R$ (note that continuity of $ \phi_R$ guarantees 
 that $D\pr{h_R}\subset D\pr{h}$), which gives that $U_{m,n,h_R}\to 
 I_m\pr{S,h_R,\cdot}$ 
 almost surely and the dominated convergence theorem allows to conclude.

\subsection{Convergence of weighted averages}

The proof of Theorems~\ref{thm:conv_Lp_ergodique} and~\ref{thm:conv_Lp_non-ergodique_densite} rests on 
weighted versions of the ergodic theorem, which read as follows.

\begin{Lemma}\label{lem:weighted_averatges}
Let $T$ be a measure preserving map on the probability space 
$\pr{\Omega,\Fca,\PP}$ and let $\Ica$ be the $\sigma$-algebra of $T$ invariance sets. 
Let $p\geq 1$ and let $\pr{f_j}_{j\geq 1}$ be a sequence of functions
such that $\norm{f_j-f}_p\to 0$. Then for each $m\geq 0$, the following convergence holds:
\begin{equation}\label{eq:weighted_ergodic_theorem_ordre_m}
\lim_{n\to\infty}\norm{\frac 1{\binom n{m+1}}\sum_{j=m+1}^n\binom{j-1}m f_j\circ T^j-\E{f\mid\Ica}}_p=0.
\end{equation}
\end{Lemma}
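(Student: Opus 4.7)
The plan is to split along $f_j = (f_j - f) + f$ into
\begin{equation*}
A_n := \frac{1}{\binom{n}{m+1}}\sum_{j=m+1}^n \binom{j-1}{m}(f_j - f)\circ T^j, \quad B_n := \frac{1}{\binom{n}{m+1}}\sum_{j=m+1}^n \binom{j-1}{m} f\circ T^j,
\end{equation*}
and treat the two pieces separately. For $A_n$, the measure-preserving property of $T$ gives $\norm{(f_j-f)\circ T^j}_p = \norm{f_j-f}_p$, so the triangle inequality reduces $\norm{A_n}_p$ to a weighted average of the null numerical sequence $\pr{\norm{f_j-f}_p}_{j\geq 1}$. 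The weights $\binom{j-1}{m}/\binom{n}{m+1}$ sum to one by the hockey stick identity $\sum_{j=m+1}^n\binom{j-1}{m}=\binom{n}{m+1}$, and for each fixed $j_0$ their partial sum up to $j_0$ is $O(j_0^{m+1}/n^{m+1})\to 0$, so the standard Toeplitz lemma gives $\norm{A_n}_p\to 0$.

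For $B_n$ I would perform a summation by parts. Setting $S_j:=\sum_{k=1}^j f\circ T^k$ and writing $f\circ T^j = S_j-S_{j-1}$, Abel's transformation together with Pascal's identity $\binom{j}{m}-\binom{j-1}{m}=\binom{j-1}{m-1}$ yields
\begin{equation*}
\sum_{j=m+1}^n \binom{j-1}{m}f\circ T^j = \binom{n-1}{m}S_n - S_m - \sum_{j=m+1}^{n-1}\binom{j-1}{m-1}S_j.
\end{equation*}
The elementary identities $\binom{n-1}{m}/\binom{n}{m+1}=(m+1)/n$ and $j\binom{j-1}{m-1}=m\binom{j}{m}$ then turn this into
\begin{equation*}
B_n = (m+1)\frac{S_n}{n} - \frac{m}{\binom{n}{m+1}}\sum_{j=m+1}^{n-1}\binom{j}{m}\frac{S_j}{j} - \frac{S_m}{\binom{n}{m+1}},
\end{equation*}
whose last term clearly tends to $0$ in $L^p$.

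To finish I would invoke the $L^p$ mean ergodic theorem, which gives $S_j/j\to\E{f\mid\Ica}$ in $L^p$. Since $\sum_{j=m+1}^{n-1}\binom{j}{m}=\binom{n}{m+1}-1$, the weights $\binom{j}{m}/(\binom{n}{m+1}-1)$ form another probability weighting satisfying the non-concentration condition, and a second application of the Toeplitz lemma, this time for $L^p$-valued sequences, yields the $L^p$ convergence of the middle weighted average to $\E{f\mid\Ica}$. Combining gives $B_n\to(m+1)\E{f\mid\Ica}-m\,\E{f\mid\Ica}=\E{f\mid\Ica}$ in $L^p$, which together with $\norm{A_n}_p\to 0$ proves \eqref{eq:weighted_ergodic_theorem_ordre_m}. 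The only mildly technical step is the Abel summation and its binomial bookkeeping; conceptually the proof reduces to the classical $L^p$ mean ergodic theorem and the Toeplitz lemma.
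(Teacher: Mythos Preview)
Your argument is correct and follows essentially the same route as the paper: split off the $(f_j-f)$ part and handle it by a Ces\`aro/Toeplitz argument, then treat the $f$ part by Abel summation combined with the $L^p$ ergodic theorem. The only cosmetic difference is that the paper first replaces $f$ by $f-\E{f\mid\Ica}$ so that $\norm{S_j}_p/j\to 0$ and the boundary and telescoped terms vanish separately, whereas you keep the conditional expectation and recover it from the identity $(m+1)-m=1$; the binomial bookkeeping is otherwise identical.
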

 
\begin{proof}
First observe that since $T$ is measure preserving, 
\begin{multline}
\norm{\frac 1{\binom n{m+1}}\sum_{j=m+1}^n\binom{j-1}m f_j\circ T^j-\E{f\mid\Ica}}_p\\
\leq \frac 1{\binom n{m+1}}\sum_{j=m+1}^n\binom{j-1}m
\norm{f_j-f}_p+
\norm{\frac 1{\binom n{m+1}}\sum_{j=m+1}^n\binom{j-1}m f\circ T^j-\E{f\mid\Ica}}_p.
\end{multline}
The first term goes to zero as $n$ goes to infinity from the elementary fact 
that $\sum_{i=1}^nc_i x_i/\pr{\sum_{j=1}^n c_j} \to 0$ if $c_j>0$ and 
$\sum_{j=1}^n c_j\to \infty$. For the second term, we assume for 
without loss of generality that $\E{f\mid\Ica}=0$, otherwise, we replace $f$ by $f-\E{f\mid\Ica}$. Let $S_j:=\sum_{i=1}^j f\circ T^i$. Then 
\begin{equation}
\sum_{j=m+1}^n\binom{j-1}m f\circ T^j
=\sum_{j=m+1}^n\binom{j-1}m S_j-\sum_{j=m}^{n-1}\binom{j}mS_j
\end{equation}
and it follows that 
\begin{equation}\label{eq:step_proof_ergodic_weighted}
\norm{\frac 1{\binom n{m+1}}\sum_{j=m+1}^n\binom{j-1}m f\circ T^j }_p\leq \frac{\binom{n-1}m }{\binom n{m+1}}\norm{S_n}_p+
 \frac 1{\binom n{m+1}}
 \sum_{j=m}^{n-1}\pr{\binom{j}m-\binom{j-1}m}\norm{S_j}_p.
\end{equation}
Since $\norm{S_n}_p/n\to 0$, the first term of the right hand side of \eqref{eq:step_proof_ergodic_weighted} goes to $0$ as $n$ goes to infinity. For the 
second term, one has for each $m\leq R\leq n-1$ that 
\begin{multline}
 \frac 1{\binom n{m+1}}
 \sum_{j=m}^{n-1}\pr{\binom{j}m-\binom{j-1}m}\norm{S_j}_p
 \leq  \frac 1{\binom n{m+1}}
 \sum_{j=m}^{R}\pr{\binom{j}m-\binom{j-1}m}\norm{S_j}_p\\
 + \frac 1{\binom n{m+1}}
 \sum_{j=R}^{n-1}j\pr{\binom{j}m-\binom{j-1}m}\sup_{k\geq R}\frac{\norm{S_k}_p}k
\end{multline}
hence 
\begin{equation}
\limsup_{n\to\infty} \frac 1{\binom n{m+1}}\sum_{j=m}^{n-1}\pr{\binom{j}m-\binom{j-1}m}\norm{S_j}_p
 \leq\sup_{k\geq R}\frac{\norm{S_k}_p}k
\end{equation}
and we conclude using again that $\frac{\norm{S_k}_p}k\to 0$ as $k$ goes to infinity.
\end{proof}

\subsection{Proof of Theorem~\ref{thm:conv_Lp_ergodique}}
The proofs will lead us to consider truncated versions 
of the kernel $h$. Define for each fixed $R>0$ the maps $\phi_R\colon\R\to\R$ by
\begin{equation}
\phi_R\pr{t}:=
\begin{cases}
-R&\mbox{ if }t<-R,\\
t&\mbox{ if }-R\leq t<R,\\
 R&\mbox{ if }t\geq R 
  \end{cases}
\end{equation}
and $h_R\colon S^m\to\R$ by 
\begin{equation}\label{eq:def_de_hR}
h_R\pr{x_1,\dots,x_m}:=\phi_R\pr{h\pr{x_1,\dots,x_m}},\quad x_1,\dots,x_m\in S.
\end{equation}
Then $\abs{h_R}$ is bounded by $R$ and  since $D\pr{h_R}\subset D\pr{h}$, the 
equality
$\PP_{X_0}\times 
\PP_{X_0}\pr{D\pr{h_R}}=0$ holds.  We claim that it suffices to prove that 
\eqref{eq:conv_Lp_non_sym} holds for each $R$ with $h$ replaced by $h_R$. Indeed, by the triangle inequality, 
\begin{equation}
\sup_{n\geq 2}\norm{U_{2,n,h}-U_{2,n,h_R}}_p\leq 
\sup_{1\leq i<j}\norm{h\pr{X_{i},X_{j}}
\ind{\abs{h\pr{X_i,X_{j}}}>R}}_p
\end{equation}
and
\begin{multline}
\int_\Omega \abs{
\int_{S^2}h\pr{x,y}d\mu_{\omega}\pr{x}
d\mu_{\omega}\pr{y}-
\int_{S^2}h_R\pr{x,y}d\mu_{\omega}\pr{x}
d\mu_{\omega}\pr{y}
}^pd\PP\pr{\omega}
\\
\leq 
\int_\Omega
\int_{S^2} \abs{h\pr{x,y}}^p \ind{\abs{h\pr{x,y}}>R}  d\mu_{\omega}\pr{x}
d\mu_{\omega}\pr{y}d\PP\pr{\omega},
\end{multline}
hence assumptions~\ref{ass:UI_ergodique} and \ref{ass:h_copies_ordre_p} 
allows us to choose $R$ making the previous quantities as small as we wish.
Defining
\begin{equation}
d_{j,R}:=\frac 1j\sum_{i=1}^{j-1}h_R\pr{X_{-i},X_0},
\end{equation}  
we get that 
\begin{equation}\label{eq:U_2n_en_terms_de_djR}
U_{2,n,h_R}=\frac 1{\binom n2}\sum_{j=2}^n jd_{j,R}\circ T^j.
\end{equation}
We first show that there exists a set of probability one $\Omega'$ such that for each $\omega\in \Omega'$, 
\begin{equation}\label{eq:conv_djR}
d_{j,R}\pr{\omega}\to \int_{S^2}h_R\pr{x,y}d\mu_{\omega}\pr{x}d\delta_{X_0\pr{\omega}}=:Y_R\pr{\omega}.
\end{equation}
First, separability of $S$ guarantees the existence of a countable collection 
$\pr{f_k}_{k\geq 1}$ of continuous and bounded functions from $S$ to $\R$ 
such that a sequence $\pr{\mu_n}_{n\geq 1}$ of probability measures converges weakly to 
a probability measure $\mu$ if and only if for each $k\geq 1$, 
$\int f_kd\mu_n\to \inf f_kd\mu$.

Taking $\mu_{n,\omega}:= n^{-1}\sum_{i=1}^n \delta_{X_{-i}\pr{\omega}}$, the ergodic theorem furnishes for each $k\geq 1$ a set $\Omega_k$ having probability one for which 
the convergence
\begin{equation}
\lim_{n\to\infty}\int f_k\pr{x}d\mu_{n,\omega}
=\lim_{n\to\infty}\frac 1n\sum_{j=1}^n f_k\pr{X_j\pr{\omega}}
=\E{f_k\pr{X_0}\mid\Ica}\pr{\omega}=\int f_k\pr{x}d\mu_{\omega}\pr{x}.
\end{equation}
holds for each $\omega\in\Omega_k$. Consequently, for each $\omega\in\Omega':=\bigcap_{k\geq 1}\Omega_k$, one has $\mu_{n,\omega}\to 
\mu_{\omega}$ weakly in $S$ and by Theorem 3.2 (page 21) of \cite{MR0233396}, 
we get that $\mu_{n,\omega}\times\delta_{X_0\pr{\omega}}\to 
\mu_{\omega}\times\delta_{X_0\pr{\omega}}$ weakly in $S^2$. Since $h_R$ is 
bounded and for almost every $\omega\in \Omega$, 
$\mu_{\omega}\times\delta_{X_0\pr{\omega}}
\pr{D\pr{h_R}}=0$, we get \eqref{eq:conv_djR}. 
 Moreover, $\abs{d_{j,R}}\leq R$ hence by dominated convergence, 
 \begin{equation}\label{eq:convergence_de_djR}
 \lim_{j\to\infty}\norm{d_{j,R} - \int_{S^2}h_R\pr{x,y}d\mu_{\omega}\pr{x}d\delta_{X_0\pr{\omega}}}_p=0.
 \end{equation}
By \eqref{eq:U_2n_en_terms_de_djR} and the fact that $T$ 
is measure preserving, we infer that 
\begin{equation}\label{eq:conv_Ustat_ordre_2_etape}
\norm{U_{2,n,h_R} -\E{Y_R\mid\Ica}}_p
\leq \frac 1{\binom n2}\sum_{j=2}^nj\norm{ d_{j,R}-Y_R }_p
+\norm{\frac 1{\binom n2}\sum_{j=2}^nj Y_R\circ T^j -\E{Y_R\mid\Ica}}_p.
\end{equation}
An application of \eqref{eq:conv_djR} combined with the dominated convergence theorem 
shows that the first term of the right hand side of 
\eqref{eq:conv_Ustat_ordre_2_etape} goes to $0$ as $n$ goes to infinity. 
Then Lemma~\ref{lem:weighted_averatges} with $m=1$   shows that $\norm{U_{2,n,h_R} -
\E{Y_R\mid\Ica}}_p\to 0$. It remains to check that 
\begin{equation}
\E{Y_R\mid\Ica}\pr{\omega}=\int_{S^2}h\pr{x,y}d\mu_{\omega}\pr{x}
d\mu_{\omega}\pr{y}.
\end{equation}
Observe that by the ergodic theorem, $\E{Y_R\mid\Ica}\pr{\omega}
=\lim_{n\to\infty}  n^{-1}\sum_{k=1}^n Y_R\pr{T^k\omega}$. 
Since $\mu_{T^k\omega}=\mu_{\omega}$, it follows that 
\begin{equation}
\E{Y_R\mid\Ica}\pr{\omega}=\lim_{n\to\infty}\frac 1n
\sum_{k=1}^n\int_{S^2}h_R\pr{x,y}d\mu_{\omega}\pr{x}d\delta_{X_k\pr{\omega}}\pr{y}.
\end{equation}
Using similar arguments as before gives that for almost every $\omega$, 
$\mu_\omega\times \pr{n^{-1}\sum_{k=1}^n\delta_{X_k\pr{\omega}}}$ 
converges in distribution to $\mu_\omega\times\mu_\omega$. 
This ends the proof of Theorem~\ref{thm:conv_Lp_ergodique}.

\subsection{Proof of Theorem~\ref{thm:conv_Lp_non-ergodique_densite}}

We start by proving Theorem~\ref{thm:conv_Lp_non-ergodique_densite} 
in the case where $h\pr{x_1,\dots,x_m}=\prod_{\ell=1}^m
\ind{x_\ell\in A_\ell}$. We show by induction over $m$ that 
if $A_\ell,\ell\in\intent{1,m}$ are Borel subsets of $\R^d$ and 
$\pr{X\circ T^i}_{i\in\Z}$ a stationary sequence with invariance 
$\sigma$-algebra $\Ica$, then 
\begin{equation}\label{eq:thm_ergo_indic}
\lim_{n\to\infty}\norm{\frac 1{\binom nm}\sum_{\pr{i_\ell}_{\ell\in\intent{1,m}}\in\inc^m_n}
\prod_{\ell=1}^m
\ind{X_{i_\ell}\in A_\ell} -\prod_{\ell=1}^m\E{\ind{X_0\in A_\ell} 
\mid\Ica}  }_p=0.
\end{equation}
The case $m=1$ is a direct consequence of the ergodic theorem. 
Let us show the case $m=2$. We start from 
\begin{equation}
 \frac{1}{\binom n2}\sum_{1\leq i<j\leq n}\ind{X_i\in A_1}\ind{X_j\in A_2}  
\\
 = \frac{1}{\binom n2}\sum_{j=2}^n j \pr{
\frac 1j\sum_{ k =1}^{j-1}\ind{X_{- k }\in A_1}\ind{X_0\in A_2}  }\circ 
T^j,
\end{equation}
where the change of index $k=j-i$ for a fixed $j$ has been done.
Let $f_j:=\frac 1j\sum_{k=1}^{j-1}\ind{X_{-k}\in A_1}\ind{X_0\in 
A_2}$. 
By the ergodic theorem, $f_j\to \E{\ind{X_0\in A_1}\mid\Ica}\ind{X_0\in A_2}$, which gives \eqref{eq:thm_ergo_indic} for $m=2$. 

Suppose now that \eqref{eq:thm_ergo_indic} holds for each Borel subset $A_1,\dots,A_m$ of $\R^d$ and each strictly stationary sequence $\pr{X_0\circ T^i}_{i\in\Z}$. Let $A_1,\dots,A_{m+1}$ be Borel subsets 
of $\R^d$ and let $\pr{X_0\circ T^i}_{i\in\Z}$ be a strictly stationary sequence. We start from 
\begin{multline}
\frac 1{\binom n{m+1}}\sum_{\pr{i_\ell}_{\ell\in\intent{1,m+1}}\in\inc^{m+1}_n}
\prod_{\ell\in\intent{1,m+1}}
\ind{X_{i_\ell}\in A_\ell}\\
=\frac 1{\binom n{m+1}}\sum_{j=m+1}^n 
\pr{\ind{X_0\in A_{m+1}}\sum_{\pr{i_\ell}_{\ell\in\intent{1,m}}\in\inc^m_{j-1}} \prod_{\ell\in\intent{1,m}   }\ind{X_{i_\ell-j}\in A_\ell   }}\circ T^j.
\end{multline}

Define
\begin{equation}
f_j:=\frac 1{\binom{j-1}m}\ind{X_0\in A_{m+1}}\sum_{\pr{i_\ell}_{\ell\in\intent{1,m}}\in\inc^m_{j-1}} \prod_{\ell\in\intent{1,m}   }\ind{X_{i_\ell-j}\in A_\ell   }.
\end{equation}
Doing the changes of index $k_1=j-i_m,\dots,k_m=j-i_1$, the previous expression can be rewritten as 
\begin{equation}
f_j=\ind{X_0\in A_{m+1}}\frac 1{\binom{j-1}m}\sum_{\pr{k_\ell}_{\ell\in\intent{1,m}}\in\inc^m_{j-1}} \prod_{\ell\in\intent{1,m}   }\ind{X_{-k_\ell}\in A_{m-\ell+1}   }
\end{equation} 
and using the induction assumption, we derive that 
\begin{equation}
\lim_{j\to\infty}\norm{f_j-  \ind{X_0\in A_{m+1}}\prod_{\ell\in \intent{1,m}}
\E{\ind{X_0\in A_{m-\ell+1}}\mid \Ica}}_p.
\end{equation}
Then we conclude by \eqref{eq:weighted_ergodic_theorem_ordre_m}.

We now show \eqref{eq:thm_erg_densite} in the general case. 
Fix a positive $\eps$ and define for a positive $K$
\begin{equation}
h^{\pr{K}}\pr{x_1,\dots,x_m}=h\pr{x_1,\dots,x_m}
\ind{\abs{h\pr{x_1,\dots,x_m}}\leq K}
\prod_{\ell=1}^m\ind{\abs{x_\ell}_d\leq K},
\end{equation}
where $\abs{\cdot}_d$ denotes the Euclidean norm on $\R^d$.  
Observe that by the triangle inequality, 
\begin{multline}
\norm{U_{m,n,h}-U_{m,n,h^{\pr{K}}}}_p
\leq \sup_{1\leq i_1< \dots<i_m}
\norm{h\pr{X_{i_1},\dots,X_{i_m}}\ind{\abs{h\pr{X_{i_1},\dots,X_{i_m}}}>K} }_p\\
+\sum_{\ell=1}^m \sup_{1\leq i_1< \dots<i_m}
\norm{h\pr{X_{i_1},\dots,X_{i_m}}\ind{\abs{ x_\ell}_d>K} }_p,
\end{multline}
hence by assumption~\ref{ass:Ui_dens}, we can find $K'$ such that for each $K\geq K'$, 
\begin{equation}
\sup_{n\geq m}\norm{U_{m,n,h}-U_{m,n,h^{\pr{K}}}}_p\leq \eps.
\end{equation}
Moreover, by assumption~\ref{ass:inte_limite}, we can choose $K''$ such that for each $K\geq K''$, 
\begin{equation}
\int_{\Omega}
I_m\pr{\R^d, \abs{h-h^{\pr{K}}}^p,\omega}d\PP\pr{\omega}
\leq\eps^p .
\end{equation}
 Let $K_0=\max\ens{K',K''}$. Observe that in  assumptions~\ref{ass:densite_bornee} and \ref{ass:den_bornee_m_omega} , 
 we can assume without loss of generality that $q_0=q_1$. By standard results in measure theory, we know that we can find an integer $J$, 
 constants $c_1,\dots,c_J$ and Borel subsets $A_{\ell,j}$, $\ell\in\intent{1,m}, j\in\intent{1,J}$ such that 
 \begin{equation}\label{eq:approx_h_K_comb_ind}
 \int_{\pr{\R^d}^m}\abs{h^{\pr{K_0}}\pr{x_1,\dots,x_m}-
\widetilde{ h}^{\pr{K_0}}\pr{x_1,\dots,x_m}}^{p\frac{q_0}{q_0-1}}
dx_1\dots dx_m 
<\pr{M_1+M_2}^p\eps^{p\frac{q_0}{q_0-1}} ,
 \end{equation}
where 
\begin{equation}\label{eq:expression_h_K_comb_ind}
\widetilde{ h}^{\pr{K_0}}\pr{x_1,\dots,x_m}
=\sum_{j=1}^Jc_j\prod_{\ell=1}^m
\ind{x_\ell \in A_{\ell,j}}.
\end{equation}
Notice that for each $1\leq i_1<\dots<i_m$,
\begin{multline}
\norm{h^{\pr{K_0}}\pr{X_{i_1},\dots,X_{i_m}}-
\widetilde{ h}^{\pr{K_0}}\pr{X_{i_1},\dots,X_{i_m}}}_p^p\\
= \int_{\pr{\R^d}^m}\abs{h^{\pr{K_0}}\pr{x_1,\dots,x_m}-
\widetilde{ h}^{\pr{K_0}}\pr{x_1,\dots,x_m}}^{p }
f_{i_1,\dots,i_m}\pr{x_1,\dots,x_m}
dx_1\dots dx_m
\end{multline}
hence using Hölder's inequality, \eqref{eq:approx_h_K_comb_ind} 
and assumption~\ref{ass:densite_bornee}, we derive that 
\begin{equation}
\sup_{1\leq i_1<\dots<i_m}\norm{h^{\pr{K_0}}\pr{X_{i_1},\dots,X_{i_m}}-
\widetilde{ h}^{\pr{K_0}}\pr{X_{i_1},\dots,X_{i_m}}}_p \leq \eps
\end{equation}
and by the triangle inequality, 
\begin{equation}
\sup_{N\geq m}\norm{U_{m,N,h^{\pr{K_0}}}-U_{m,N,\widetilde{h}^{\pr{K_0}}}}_p\leq\eps.
\end{equation}
Moreover, using Hölder's inequality, we find that 
\begin{equation}
\int_{\Omega}
I_m\pr{\R^d,\abs{h^{\pr{K_0}}-\widetilde{h}^{\pr{K_0}}}^p,\omega}d\PP\pr{\omega}
\leq \eps^p.
\end{equation}
As a consequence, 
\begin{align*}
\norm{U_{m,n,h}-I_m\pr{\R^d,h,\cdot}}_p
&\leq \sup_{N\geq m}\norm{U_{m,N,h}-U_{m,N,h^{\pr{K_0}}}}_p
+ \sup_{N\geq m}\norm{U_{m,N,h^{\pr{K_0}}}-U_{m,N,\widetilde{h}^{\pr{K_0}}}}_p
\\
&+\norm{U_{m,n,\widetilde{h}^{\pr{K_0}}}-I_m\pr{\R^d,\widetilde{h}^{\pr{K_0}},\cdot}}_p
+\norm{I_m\pr{\R^d,\widetilde{h}^{\pr{K_0}},\cdot}-
I_m\pr{\R^d,h^{\pr{K_0}},\cdot}}_p\\
&+\norm{I_m\pr{\R^d,\ h ^{\pr{K_0}},\cdot}-
I_m\pr{\R^d,h ,\cdot}}_p.
\end{align*}
By \eqref{eq:thm_ergo_indic} and \eqref{eq:expression_h_K_comb_ind}, 
we can find $n_0$ such that for each $n\geq n_0$, 
$\norm{U_{m,n,\widetilde{h}^{\pr{K_0}}}-I_m\pr{\R^d,\widetilde{h}^{\pr{K_0}
},\cdot}}_p\leq \eps$ hence 
we derive that for such $n$'s, $\norm{U_{m,n,h}-I_m\pr{\R^d,h,\cdot}}_p\leq 4\eps$. This ends the proof of Theorem~\ref{thm:conv_Lp_non-ergodique_densite}.

\subsection{Proof of the results of Subsection~\ref{subsec:examples}}
  \begin{proof}[Proof of Corollary~\ref{cor:kernel_test_symmetry}]
    This is an application of Theorem~\ref{thm:ergodic_theorem_sym_ps}.
    Assumption~\ref{ass:h_sym}  and \ref{ass:h_borne} are clear. In order to
    check Assumption~\ref{ass:h_cont}, we notice that
    \begin{multline}
     D\pr{h}\subset \ens{\pr{x_1,x_2,x_3}\in \R^3,2x_1=x_2+x_3} \cup
     \ens{\pr{x_1,x_2,x_3}\in \R^3,2x_2=x_1+x_3}\\
     \cup\ens{\pr{x_1,x_2,x_3}\in \R^3,2x_3=x_1+x_2}.
    \end{multline}
    It suffices to show that for almost every $\omega$, $\int_{\R^3}
\ind{2x_1=x_2+x_3}d\mu_\omega\pr{x_1}
    d\mu_\omega\pr{x_2}d\mu_\omega\pr{x_3}=0$, since the treatment of the other
terms is completely similar. Observe that
$\int \ind{2x_1=x_2+x_3}d\mu_\omega\pr{x_3}=
\E{\ind{X_0=2x_1-x_2}\mid\Ica }\pr{\omega}$, and the expectation of
$\E{\ind{X_0=2x_1-x_2}\mid\Ica }=0$ is zero hence this random variable equals
$0$  almost surely.
  \end{proof}
  \begin{proof}[Proof of Corollary~\ref{cor:distance_covariance}]
    This is an application of Corollary~\ref{cor:conv_Lp_sym}.
Assumption~\ref{ass:h_sym_cor} is by definition satisfied. Let us check
assumption~\ref{ass:h_cor_UI}. By definition of $h$, it suffices to check that
the family $\ens{\abs{f\pr{X_{i_1},X_{i_2},X_{i_3},X_{i_4}}
    f\pr{X'_{i_1},X'_{i_2},X'_{i_5},X'_{i_6}}}^p,1\leq i_1<\dots<i_6}$
is uniformly integrable. Using the elementary fact that if $\pr{Y_j}_{j\geq 1}$
is independent of $\pr{Y'_j}_{j\geq 1}$ and both sequences are uniformly
integrable, then so is $\pr{Y_jY'_j}_{j\geq 1}$, it suffices to prove that
$\ens{\abs{f\pr{X_{i_1},X_{i_2},X_{i_3},X_{i_4}}}^p,1\leq i_1<\dots<i_4}$ is
uniformly integrable, since the argument for the other term is completely 
similar. By
definition of $f$ and the triangle inequality,
$\abs{f\pr{X_{i_1},X_{i_2},X_{i_3},X_{i_4}}}\leq 2
\pr{d\pr{X_{i_1},x_0 }+d\pr{X_{i_2},x_0 }+d\pr{X_{i_3},x_0 } +d\pr{X_{i_4},x_0
}}$ and uniform integrability follows from finiteness of
$\E{d\pr{X_0,x_0}^p}$. Let us check \ref{ass:h_cor_limit_Lp}. Using the 
definition of $h$ and $f$ and the triangle inequality, it suffices to prove 
that $\int_{S^2\times 
S^2}d\pr{x_1,x_0}^pd\pr{y_2,x_0}^pd\mu_\omega\pr{x_1,y_1}d\mu_\omega\pr{x_2,y_2}
$ and $\int_{S^2}d\pr{x_1,x_0}^pd\pr{y_1,x_0}^pd\mu_\omega\pr{x_1,y_1}$  are 
finite. This follows from the fact that $\int 
u\pr{x}v\pr{y}d\mu_{\omega}\pr{x,y}=\E{u\pr{X_0}v\pr{X'_0}\mid\Ica 
}\pr{\omega}$ and integrability of $d\pr{X_0,x}^pd\pr{X'_0,x}^p$. Continuity 
of $h$ guarantees \ref{ass:h_cont_cor}.
  \end{proof}
  \begin{proof}[Proof of Corollary~\ref{cor:gaussian}]
  Condition \eqref{eq:det_matrice_cov} shows that the density of the vector 
  $\pr{X_{i_1},\dots,X_{i_m}}$ is bounded over $\R^m$ and the bound is 
uniform with respect to $\pr{i_1,\dots,i_m}$. Therefore, 
Assumption~\ref{ass:densite_bornee} is satisfied. 
  By \eqref{eq:cond_ergodicite}, the sequence $\pr{X_0\circ T^i}_{i\in\Z}$ is 
ergodic (see \cite{MR0285046}). \ref{ass:den_bornee_m_omega} holds because 
$f_\omega$ is the density of $X_0$ and \ref{ass:inte_limite} follows from 
\eqref{eq:integrability_example_gaussian}. 
  \end{proof}
\textbf{Acknowledgement}    The author would like to thank the referees 
for many valuable comments that improved the note.

\def\cprime{$'$}


\begin{thebibliography}{9}
\providecommand{\natexlab}[1]{#1}
\providecommand{\url}[1]{\texttt{#1}}
\expandafter\ifx\csname urlstyle\endcsname\relax
  \providecommand{\doi}[1]{doi: #1}\else
  \providecommand{\doi}{doi: \begingroup \urlstyle{rm}\Url}\fi

\bibitem[Aaronson et~al.(1996)Aaronson, Burton, Dehling, Gilat, Hill, and
  Weiss]{MR1363941}
J.~Aaronson, R.~Burton, H.~Dehling, D.~Gilat, T.~Hill, and B.~Weiss.
\newblock Strong laws for {$L$}- and {$U$}-statistics.
\newblock \emph{Trans. Amer. Math. Soc.}, 348\penalty0 (7):\penalty0
  2845--2866, 1996.
\newblock ISSN 0002-9947.
\newblock URL \url{https://doi.org/10.1090/S0002-9947-96-01681-9}.

\bibitem[Arcones(1998)]{MR1624866}
M.~A. Arcones, \emph{The law of large numbers for {$U$}-statistics under
  absolute regularity}, Electron. Comm. Probab. \textbf{3} (1998), 13--19.
  \MR{1624866}


\bibitem[Billingsley(1968)]{MR0233396}
P.~Billingsley.
\newblock \emph{{Convergence of probability measures}}.
\newblock John Wiley \& Sons Inc., New York, 1968.

\bibitem[Borovkova et~al.(2002)Borovkova, Burton, and Dehling]{MR1979966}
S.~Borovkova, R.~Burton, and H.~Dehling.
\newblock From dimension estimation to asymptotics of dependent
  {$U$}-statistics.
\newblock In \emph{Limit theorems in probability and statistics, {V}ol. {I}
  ({B}alatonlelle, 1999)}, pages 201--234. J\'{a}nos Bolyai Math. Soc.,
  Budapest, 2002.

\bibitem[Breiman(1968)]{MR0229267}
    L.~Breiman
    \newblock \emph{Probability},
    \newblock Addison-Wesley Publishing Co., Reading, Mass.-London-Don
              Mills, Ont., 1968. 

\bibitem[Dehling et~al.(2023)]{dehling2023remarks}
H. Dehling, D. Giraudo, and D. Voln\'{y}.
\newblock Some remarks on the ergodic theorem for {$U$}-statistics, 2023.
\newblock C. R. Math. Acad. Sci. Paris 361 (2023), 1511–1519
\newblock URL \url{https:// 10.5802/crmath.494}

\bibitem[Dehling and Sharipov(2009)]{MR2571765}
H.~G. Dehling and O.~Sh. Sharipov, \emph{Marcinkiewicz-{Z}ygmund
  strong laws for {$U$}-statistics of weakly dependent observations}, Statist.
  Probab. Lett. \textbf{79} (2009), no.~19, 2028--2036. \MR{2571765}


\bibitem[Denker and Gordin(2014)]{MR3256808}
M. Denker and M. Gordin.
\newblock Limit theorems for von {M}ises statistics of a measure preserving
  transformation.
\newblock \emph{Probab. Theory Related Fields}, 160\penalty0 (1-2):\penalty0
  1--45, 2014.
\newblock ISSN 0178-8051.
\newblock URL \url{https://doi.org/10.1007/s00440-013-0522-z}.

\bibitem[Giraudo(2021)]{MR4243516}
D. Giraudo, \emph{Limit theorems for {$U$}-statistics of {B}ernoulli data},
  ALEA Lat. Am. J. Probab. Math. Stat. \textbf{18} (2021), no.~1, 793--828.
  \MR{4243516}


\bibitem[Hoeffding(1948)]{MR26294}
W. Hoeffding.
\newblock A class of statistics with asymptotically normal distribution.
\newblock \emph{Ann. Math. Statistics}, 19:\penalty0 293--325, 1948.
\newblock ISSN 0003-4851.
\newblock URL \url{https://doi.org/10.1214/aoms/1177730196}.

\bibitem[Lachi\`eze-Rey and Reitzner(2016)]{MR3585402}
R. Lachi\`eze-Rey and M. Reitzner.
\newblock {$U$}-statistics in stochastic geometry.
\newblock In \emph{Stochastic analysis for {P}oisson point processes}, volume~7
  of \emph{Bocconi Springer Ser.}, pages 229--253. Bocconi Univ. Press, 2016.

\bibitem[Lyons(2013)]{MR3127883}
R. Lyons.
\newblock Distance covariance in metric spaces.
\newblock \emph{Ann. Probab.}, 41\penalty0 (5):\penalty0 3284--3305, 2013.
\newblock ISSN 0091-1798.
\newblock URL \url{https://doi.org/10.1214/12-AOP803}.

\bibitem[Maruyama(1970)]{MR0285046}
  G. Maruyama
  \newblock Infinitely divisible processes.
  \newblock \emph{Teor. Verojatnost. i Primenen.}, 15, 3--23 (1970) 
  \newblock URL \url{https:doi/10.1137/1115001}

\end{thebibliography}
\end{document}